\newtheorem{thm}{Theorem}
\newtheorem{lem}{Lemma}
\newtheorem{prop}{Proposition}[section]
\newtheorem{cor}{Corollary}
\theoremstyle{definition}
\newtheorem{question}{Question}
\newtheorem{exa}{Example}[section]
\newtheorem{rem}{Remark}
\DeclareMathOperator{\GL}{GL}
\DeclareMathOperator{\CC}{C}
\DeclareMathOperator{\NN}{N}
\renewcommand{\AA}{\mathbb A}
\begin{document}
\title{Small doubling in ordered semigroups}

\author{Salvatore Tringali}

\address{CMLS, \'Ecole polytechnique - {}91128 Palaiseau cedex, France
\newline
\indent{}\textit{Web site:} {\rm \href{http://www.math.polytechnique.fr/~tringali/}{http://www.math.polytechnique.fr/\textasciitilde{}tringali/}}}

\email{salvatore.tringali@cmls.polytechnique.fr }

\thanks{This research is partially supported by the French ANR Project No. ANR-12-BS01-0011 (project CAESAR). Some fundamental aspects of the work were developed while the author was funded from the European Community's 7th Framework Programme (FP7/2007-2013) under Grant Agreement No. 276487 (project ApProCEM)}

\subjclass[2010]{Primary 06A07; Secondary 06F05, 06F15, 20F60, 20M10}


\keywords{Centralizer, Freiman's structure theory, linearly ordered semigroups and semirings, matrix semirings, Minkowski sums, normalizer, product sets, semigroup semirings, small doubling, strict total orders, sumsets}

\begin{abstract}
Let $\mathbb{A} = (A, \cdot)$ be a semigroup. We generalize some recent results by G. Freiman, M. Herzog and coauthors on the
structure theory of set addition from the context of linearly orderable groups to linearly orderable semigroups, where we say that $\mathbb{A}$ is linearly orderable if there exists a total order $\le$ on $A$ such that $xz < yz$ and $zx < zy$ for all $x,y,z \in A$ with $x < y$.

In particular, we
find that if $S$ is a finite subset of $A$ generating a non-abelian
subsemigroup of $\mathbb{A}$, then $|S^2| \ge 3|S|-2$. On the road to this goal, we also prove a
number of subsidiary results, and most notably that for $S$ a finite subset of $A$ the
commutator and the normalizer of $S$ are equal to each other.
\end{abstract}

\maketitle

\section{Introduction}\label{sec:intro}
Semigroups are ubiquitous in mathematics. Apart from
being a subject of continuous
interest to algebraists, they provide a natural framework for introducing
several broadly-scoped concepts and developing large parts of
theories traditionally presented in much less general contexts.
While on the one hand this
makes it
possible to use methods and results otherwise restricted to ``richer
settings'' for larger
classes of problems, on the other hand it can suggest new directions of research and shed light on classical questions, say, with a primary focus on groups.

Through the present paper, a semigroup is, as usual, a pair $\AA = (A, \cdot)$ consisting
of a set $A$, called the carrier of $\AA$, and an associative binary operation $\cdot$ on $A$
(unless otherwise specified, all semigroups considered below are written
multiplicatively). Then, for $S \subseteq A$ we write
$\langle S \rangle_\AA$ for the smallest subsemigroup of $\mathbb A$ containing $S$, which is simply denoted by $\langle S \rangle$ if $\mathbb A$ is implied from the context.

We let an \textit{ordered semigroup} be a triple $(A, \cdot{}, \le)$, where $(A, \cdot{})$ is a semigroup, $\le$ is an order on $A$ (notice that, in this work, the term ``order'' always means ``total order''; see also Section \ref{sec:generalities}), and the following holds:
\begin{equation}\label{equ:totally_ordered_sgrp_a011}
\forall a, b, c \in A: a < b \quad \Longrightarrow \quad a c \le b c\ \ {\rm and }\ \ c a \le c b.
\end{equation}
If each of the signs ``$\le$'' in \eqref{equ:totally_ordered_sgrp_a011} is replaced with the sign ``$<$'', then $(A, \cdot{}, \le)$ is called a \textit{linearly ordered semigroup}; see, e.g., \cite{Iwasa48}.

Accordingly, we say that a semigroup $\AA = (A, \cdot{})$ is \textit{[linearly] orderable} if there exists an order $\le$ on $A$ such that $(A, \cdot{}, \le)$ is a [linearly] ordered semigroup. Then, we may also say that $\AA$ is [linearly] ordered by $\le$.

All of the above notions and terminology are adapted to monoids (that is, unital semigroups) and groups in the obvious way.

Our interest in semigroups is related here to the structure theory of groups
and its generalizations; this is an active area of
research, which has drawn a constantly increasing attention in the last two decades,
and has led to significant progress in several fields, from
algebra \cite{Gero12} to number theory and
combinatorics \cite{Natha96, Ruzsa09, TaoVu09}.

The present paper fits into this context. Our primary goal is, in fact, to extend some recent results by
G. A. Freiman, M. Herzog and coauthors from the setting of linearly orderable groups \cite{Frei12} to linearly
orderable \textit{semi}groups.

Specifically, assume for the remainder of this section that $\AA = (A, \cdot)$ is a fixed semigroup (unless a statement to the contrary is made). Then, the main contribution of this work is
the following
generalization of \cite[Theorem 1.2]{Frei12} (if $S$ is a set, we use $|S|$ for its cardinality):
\begin{thm}
\label{th:main}
Let $\AA$ be a linearly orderable semigroup and $S$
a finite subset of $A$ such that $|S^2| \le 3|S|-3$.
Then $\langle S \rangle$ is abelian.
\end{thm}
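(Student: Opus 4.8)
The plan is to establish the contrapositive: if $\langle S \rangle$ is not abelian, then $|S^2| \ge 3|S| - 2$. I would begin by fixing, once and for all, a total order $\le$ on $A$ for which $(A, \cdot{}, \le)$ is a linearly ordered semigroup, and by recording the elementary facts to be used throughout: each translation $x \mapsto cx$ and $x \mapsto xc$ is strictly increasing (hence injective, so $\mathbb A$ is cancellative); from \eqref{equ:totally_ordered_sgrp_a011} one gets the non-strict monotonicities $a \le b \Rightarrow ca \le cb$ and $ac \le bc$; and consequently $a < b$ forces $a^2 < ab < b^2$ and $a^2 < ba < b^2$. I would also note that $\langle S \rangle$ is abelian precisely when the elements of $S$ commute pairwise, so under the negation there is a pair $p, q \in S$ with $pq \ne qp$, and in particular $n := |S| \ge 2$.

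Next I would write $S = \{s_1 < s_2 < \cdots < s_n\}$ and prove the baseline bound $|S^2| \ge 2n - 1$: by monotonicity of the translations, the $2n-1$ elements $s_1 s_1 < s_1 s_2 < \cdots < s_1 s_n < s_2 s_n < \cdots < s_n s_n$ form a strictly increasing chain and are pairwise distinct, with $s_1^2 = \min S^2$ and $s_n^2 = \max S^2$. Alongside this I would isolate the rigidity principle that the rest of the argument leans on: when $|S^2|$ equals, or barely exceeds, $2n - 1$, the set $S^2$ is forced to coincide with, or stay very close to, this single chain, which via cancellativity pins $S$ down to a geometric-progression-type configuration and in particular makes $\langle S \rangle$ abelian. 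This ``near-extremal configurations are abelian'' statement is what will eventually collide with the standing assumption $pq \ne qp$.

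To push $2n - 1$ up to $3n - 2$ I would argue by induction on $n$. The base case $n = 2$ is immediate: $s_1^2 < s_1 s_2, s_2 s_1 < s_2^2$ together with $s_1 s_2 \ne s_2 s_1$ give $|S^2| \ge 4 = 3n - 2$. For $n \ge 3$ put $z := s_n$ and $S' := S \setminus \{z\}$; then $(S')^2 \subseteq [s_1^2, s_{n-1}^2]$, while the products $z s_{n-1}$, $s_{n-1} z$ and $z^2$ all lie in $(s_{n-1}^2, z^2]$ and hence outside $(S')^2$. Consequently, when $\langle S' \rangle$ is non-abelian and $z s_{n-1} \ne s_{n-1} z$, the inductive bound $|(S')^2| \ge 3(n-1) - 2$ gives three extra elements at the top and $|S^2| \ge 3n - 2$ follows at once.

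What remains are the two genuinely delicate situations: (a) $z$ commutes with $s_{n-1}$, so only two new elements are visible above $s_{n-1}^2$; and (b) $\langle S' \rangle$ is abelian, which (since $\langle S \rangle$ is not) forces $z$ to fail to commute with some $s_j$, $j < n$. In both cases I would have to descend below $s_{n-1}^2$ and count precisely how many of the products $z s_i$ and $s_i z$ with $i < n$ escape $(S')^2$ — that is, reconcile the ``vertical'' chains $zS$ and $Sz$ with the inductively (or baseline-) controlled set $(S')^2$. This is exactly where the rigidity principle re-enters: too many collisions $z s_i = s_j s_k$ with $j, k < n$ would force $S'$, and then $S$, into the progression-type regime, contradicting $pq \ne qp$; peeling off $\min S$ instead of $\max S$ gives the symmetric leverage, and a finite check disposes of the small residual configurations in case (b). (Structural facts about centralizers and normalizers of finite sets, developed elsewhere in the paper, can be used to tighten this collision analysis.) I expect this collision count — squeezing out the last one or two ``new'' elements in the cases where $z$, or $s_1$, behaves centrally — to be the main obstacle; everything preceding it is routine monotonicity.
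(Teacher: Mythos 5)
There is a genuine gap, and it sits exactly where all the content of the theorem lives. Your reduction to the contrapositive, the chain bound $|S^2|\ge 2n-1$, the base case $n=2$, and the ``easy'' inductive case (where $\langle S'\rangle$ is non-abelian \emph{and} $zs_{n-1}\ne s_{n-1}z$, so that $zs_{n-1}$, $s_{n-1}z$, $z^2$ give three new elements above $\max (S')^2$) are all fine. But the two cases you defer are not residual technicalities. Case (b) ($\langle S'\rangle$ abelian) can in fact be closed cleanly with the paper's auxiliary results: $z\notin\CC_\AA(S')$, so Lemma \ref{lem:cancellative_implies_abelian} and Lemma \ref{lem:lower_bounds0} give $|(S')^2\cup zS'\cup S'z|\ge 3(n-1)$, and $z^2$ exceeds everything in that union, yielding $3n-2$; you gesture at these tools but do not run the argument. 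Case (a), however, is where your decomposition breaks down. If $\langle S'\rangle$ is non-abelian but $z$ commutes with $s_{n-1}$ --- and in particular if $z\in\CC_\AA(S')$, which is entirely compatible with $\langle S\rangle$ being non-abelian --- then $zS'=S'z$ collapses to $n-1$ elements, only two of which ($zs_{n-1}$ and $z^2$) are forced above $\max(S')^2$; the remaining products $zs_i$ can all land inside the range of $(S')^2$, and none of Lemmas \ref{lem:lower_bounds0}, \ref{lem:cancellative_implies_abelian}, \ref{lem:lower_bounds_on_a_union} applies (they all require either non-centrality of $z$ or commutativity of $\langle S'\rangle$). You have no mechanism to produce the missing third element.

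The mechanism you propose --- a ``rigidity principle'' asserting that sets with $|S^2|$ equal to or barely exceeding $2n-1$ are pinned down to progression-type configurations and hence generate abelian subsemigroups --- is not proved anywhere in your sketch, and in the strength you need it is essentially the semigroup analogue of Proposition 3.1 of Freiman et al., which the paper explicitly states it could \emph{not} extend to semigroups and records as an open question. So the collision analysis cannot be outsourced to it. The paper's proof avoids this trap by choosing a different decomposition: it splits $S$ at the maximal index $i$ for which the initial segment $T=\{a_1,\dots,a_i\}$ generates an abelian subsemigroup, sets $V=\{a_{i+1},\dots,a_m\}$, applies the inductive hypothesis to $V$ (after showing $V^2$ is disjoint, by order considerations, from $T^2\cup a_{i+1}T\cup Ta_{i+1}$, forcing $|V^2|\le 3|V|-3$), and then derives a contradiction by locating $a_{i+1}a_j$ (for $j$ maximal with $a_{i+1}a_j\ne a_ja_{i+1}$) outside the four pieces that are shown to exhaust $S^2$. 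This choice localizes the non-commutativity at the splitting point, where the order relation supplies the disjointness that your top-element peeling cannot. As it stands, your proposal does not constitute a proof.
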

This counts as a genuine generalization of \cite[Theorem 1.2]{Frei12} because, if $\AA$ is a group and $S$ is a non-empty subset of $A$ such that the smallest sub\textit{semigroup} of $\AA$ containing $S$ is abelian, then also the sub\textit{group} of $\AA$ generated by $S$ is abelian.

Our proof of Theorem \ref{th:main} basically follows the same broad scheme
as the proof of \cite[Theorem 1.2]{Frei12}, but there are significant differences
in the details. As expected, the increased generality implied by the
switching to semigroups - and especially the fact that inverses are no
longer available - presents, in practice, a number of challenges and requires
something more than a mere adjustment of terminology (in some cases, for
instance, it is not even clear \textit{how} a certain statement on
linearly ordered groups should be rephrased in the language of semigroups).

In particular, we will look for an extension of several classical results, such as the
following lemma (here and later, the lower case Latin letters $i$, $m$ and $n$ shall denote positive integers unless otherwise noted):
\begin{lem}
\label{lem:ab=ba}
Let $\AA$ be a linearly orderable semigroup
and pick $a,b \in A$. If $a^n b = ba^n$ for some
$n$, then $ab = ba$.
\end{lem}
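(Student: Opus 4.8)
The plan is to fix, once and for all, a total order $\le$ on $A$ for which $(A, \cdot{}, \le)$ is a linearly ordered semigroup, and then to argue by contradiction. By trichotomy, if $ab \neq ba$ then either $ab < ba$ or $ba < ab$; the two cases are interchanged by swapping the roles of $a$ and $b$ in what follows, so it is enough to rule out $ab < ba$. I claim that this single strict inequality already forces $a^m b < b a^m$ for \emph{every} positive integer $m$; taking $m = n$, this contradicts the hypothesis $a^n b = b a^n$ and proves the lemma.

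The claim is proved by induction on $m$. The case $m = 1$ is the assumption $ab < ba$. Assume $a^m b < b a^m$. Multiplying on the left by $a$, and using that in a linearly ordered semigroup left multiplication by a fixed element is strictly order-preserving, gives $a^{m+1} b < a b a^m$; multiplying the base inequality $ab < ba$ on the right by $a^m$, and using the analogous strict monotonicity of right multiplication, gives $a b a^m < b a^{m+1}$. Concatenating the two yields $a^{m+1} b < b a^{m+1}$, which completes the induction.

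I do not anticipate a genuine obstacle here: the argument uses only the \emph{strictness} of the two compatibility conditions defining a linearly ordered semigroup --- precisely what lets a lone strict inequality survive repeated one-sided multiplication --- together with the totality of $\le$, which supplies the opening trichotomy. In particular, neither cancellativity nor the existence of an identity or of inverses enters, so the proof passes from groups to semigroups essentially unchanged; the only point to keep an eye on is that the conclusion is needed for the specific exponent $n$ of the statement, which is automatic since the induction delivers the inequality for all exponents. The same bookkeeping in fact yields the slightly stronger statement that $a^n b^m = b^m a^n$ implies $ab = ba$: first promote $ab < ba$ to $a b^m < b^m a$ by an entirely parallel induction on $m$, then apply the lemma with $b^m$ in place of $b$. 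This strengthening costs nothing and may be convenient later.
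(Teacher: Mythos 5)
Your argument is correct and is essentially the paper's own: the paper proves (Proposition \ref{prop:generalized_neumann}) the full chain $a^n b < a^{n-1}ba < \cdots < aba^{n-1} < ba^n$ by exactly the same induction (left-multiply the inductive inequality by $a$, right-multiply by $a$, concatenate by transitivity), and then derives the lemma by duality and contradiction just as you do. Your version merely tracks the two endpoints and the single intermediate term $aba^m$ instead of the whole chain, which is an inessential economy; the added remark on $a^n b^m = b^m a^n$ is also sound.
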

This is, in fact, a generalization of an old lemma by N.~H.~Neumann \cite{Neum49} on commutators of linearly ordered groups, appearing as
Lemma 2.2 in \cite{Frei12}.

In the same spirit, we will also need to extend \cite[Proposition
2.4]{Frei12}. To this end, we shall use $\CC_\AA(S)$ for the centralizer of $S$ (relative to $\AA$),
viz the set of all $a \in A$
such that $a y = y a$ for every $y \in S$, and $\NN_\AA(S)$ for the normalizer of $S$ (relative to $\AA$), namely the set $
\{a \in A: aS = Sa\}$. These are written
as $\CC_\AA(a)$ and $\NN_\AA(a)$, respectively,
if $S = \{a\}$ for some $a$. Then we have:
\begin{lem}
\label{lem:lower_bounds0}
Let $\AA$ be a linearly orderable semigroup and $S$ a non-empty finite
subset of $A$, and pick $y \in A \setminus \CC_\AA(S)$. Then $
|yS \cup Sy| \ge |S| + 1$,
that is $yS \ne Sy$.
\end{lem}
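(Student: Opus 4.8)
The plan is to exploit the rigidity of translations in a linearly ordered semigroup: since $\AA$ is linearly orderable, fix a total order $\le$ on $A$ for which $(A, \cdot{}, \le)$ is a linearly ordered semigroup, so that the weak inequalities in \eqref{equ:totally_ordered_sgrp_a011} may be replaced by strict ones. Set $n := |S|$ and write $S = \{s_1, \dots, s_n\}$ with $s_1 < s_2 < \cdots < s_n$. Applying strict monotonicity of left and right translation, we get $y s_1 < y s_2 < \cdots < y s_n$ and $s_1 y < s_2 y < \cdots < s_n y$. In particular $|yS| = |Sy| = n$, and the increasing enumeration of $yS$ is exactly $(y s_1, \dots, y s_n)$, while that of $Sy$ is exactly $(s_1 y, \dots, s_n y)$.

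Next I would argue by contradiction. Suppose $yS = Sy$. Two finite subsets of the totally ordered set $(A, \le)$ that coincide as sets have the same increasing enumeration, so the two tuples above must agree coordinatewise, i.e., $y s_i = s_i y$ for every $i \in \{1, \dots, n\}$. Thus $y$ commutes with every element of $S$, that is $y \in \CC_\AA(S)$, contrary to the hypothesis $y \in A \setminus \CC_\AA(S)$. Hence $yS \ne Sy$.

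To conclude, note that since $|yS| = |Sy| = n$ and $yS \ne Sy$, the set $yS \cap Sy$ has at most $n-1$ elements; therefore $|yS \cup Sy| = |yS| + |Sy| - |yS \cap Sy| \ge 2n - (n-1) = n + 1 = |S| + 1$, as wanted. (The case $n = 1$ is already covered, since then $y \notin \CC_\AA(S)$ says precisely $y s_1 \ne s_1 y$.)

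I do not expect a genuine obstacle here; the one point deserving emphasis is that the argument leans essentially on the \emph{strict} monotonicity of translations, hence on $\AA$ being linearly orderable rather than merely orderable. With only the weak inequalities of \eqref{equ:totally_ordered_sgrp_a011} a translation need not be injective, the enumerations of $yS$ and $Sy$ above need not be strictly increasing, and one could no longer force the term-by-term matching $y s_i = s_i y$; so the hypothesis cannot be relaxed in this way.
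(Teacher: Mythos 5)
Your proof is correct, but it takes a genuinely different and more elementary route than the paper. The paper argues by contradiction from $yS = Sy$ by building a maximal chain $ya_i = a_{i+1}y$ of pairwise distinct elements of $S$, closing the chain by maximality to obtain $y^{k-h+1}a_k = a_k y^{k-h+1}$, and then invoking Lemma \ref{lem:ab=ba} (the semigroup version of Neumann's lemma, itself resting on Proposition \ref{prop:generalized_neumann}) together with cancellativity to reach a contradiction. You instead observe that left and right translation by $y$ are \emph{strictly} order-preserving, so the increasing enumerations of $yS$ and $Sy$ are forced to be $(ys_1,\dots,ys_n)$ and $(s_1y,\dots,s_ny)$ respectively; the uniqueness of the sorted enumeration of a finite subset of a totally ordered set then turns the set equality $yS = Sy$ directly into the termwise identities $ys_i = s_i y$, i.e., $y \in \CC_\AA(S)$. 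This bypasses Lemma \ref{lem:ab=ba} entirely and makes the statement an almost immediate consequence of the order axioms; in effect you prove Theorem \ref{thm:normal_central} in one stroke. What the paper's chain argument buys is independence from the order: it only uses cancellativity plus the conclusion of Lemma \ref{lem:ab=ba} as a black box, so it would survive in any setting where that commutation lemma holds, whereas your argument leans irreducibly on having a compatible total order with strict translations. Your closing count $|yS \cup Sy| = 2n - |yS \cap Sy| \ge n+1$ is also correct, since two distinct $n$-element sets intersect in at most $n-1$ elements.
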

Lemma \ref{lem:lower_bounds0} is proved in Section \ref{sec:basic_properties}, along with the following
 generalization of \cite[Corollary 1.5]{Frei12}, which may perhaps be interesting \textit{per se}:
\begin{thm}
\label{thm:normal_central}
Let $S$ be a finite subset of $A$ and assume that $\AA$ is a linearly orderable semigroup. Then $\NN_\AA(S) = \CC_\AA(S)$.
\end{thm}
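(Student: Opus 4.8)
The plan is to read off Theorem~\ref{thm:normal_central} as a short consequence of Lemma~\ref{lem:lower_bounds0}, which carries essentially all of the analytic weight, together with one elementary set-theoretic remark; so the proof itself will be only a few lines, and the interesting work will in fact sit upstream in the proof of Lemma~\ref{lem:lower_bounds0}.

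First I would establish the trivial inclusion $\CC_\AA(S) \subseteq \NN_\AA(S)$, which holds for an arbitrary semigroup and an arbitrary subset $S$ of its carrier: if $a \in \CC_\AA(S)$ then $ay = ya$ for every $y \in S$, whence $aS = \{ay : y \in S\} = \{ya : y \in S\} = Sa$, i.e., $a \in \NN_\AA(S)$. For the reverse inclusion $\NN_\AA(S) \subseteq \CC_\AA(S)$ I would argue by contraposition. The degenerate case $S = \emptyset$ is handled separately and trivially, since then $\CC_\AA(S) = \NN_\AA(S) = A$; so assume $S \ne \emptyset$ and let $a \in A \setminus \CC_\AA(S)$. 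Because $\AA$ is linearly orderable and $S$ is a non-empty finite subset of $A$, Lemma~\ref{lem:lower_bounds0} applies with its distinguished element taken to be $a$, and it yields $|aS \cup Sa| \ge |S| + 1$; in particular $aS \ne Sa$, so $a \notin \NN_\AA(S)$. Hence $A \setminus \CC_\AA(S) \subseteq A \setminus \NN_\AA(S)$, that is $\NN_\AA(S) \subseteq \CC_\AA(S)$, and combining this with the first inclusion gives $\NN_\AA(S) = \CC_\AA(S)$.

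There is accordingly no genuine obstacle at this step: the only points that require a little care are that the hypotheses of Lemma~\ref{lem:lower_bounds0} (finiteness \emph{and} non-emptiness of $S$) are in force — which is precisely why the case $S = \emptyset$ must be peeled off at the start — and that linear orderability of $\AA$ is invoked exactly where Lemma~\ref{lem:lower_bounds0} needs it. All the real difficulty has been deferred to Lemma~\ref{lem:lower_bounds0}, and through it to Lemma~\ref{lem:ab=ba}, where the absence of inverses and the use of the order have to be confronted directly.
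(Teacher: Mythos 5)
Your proposal is correct and follows essentially the same route as the paper: both reduce the nontrivial inclusion $\NN_\AA(S) \subseteq \CC_\AA(S)$ to Lemma~\ref{lem:lower_bounds0} (you state it contrapositively, the paper directly), dispose of $S = \emptyset$ separately, and note that the reverse inclusion is immediate.
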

We conclude the paper with a number of examples (Appendix
\ref{sec:examples}), mostly finalized to explore conditions under which certain
semigroups (or related structures as semirings) are linearly orderable. This is mainly
to show that the class of linearly orderable semigroups
is not, in some sense, trivial.

In particular, we prove (Theorem \ref{th:zig_zag_ordering}) that, for each
$n$, the subsemigroup of $\GL_n(\mathbb R)$, the general
linear group of degree $n$ over the real field, consisting of all upper
(respectively, lower) triangular matrices with positive entries on or above
(respectively, below) the main diagonal is linearly orderable.

Then, we raise the question (to which we do not have an answer) whether or not the same conclusion holds for the subsemigroup of $\GL_n(\mathbb R)$ consisting of those matrices
which can be written as a (finite)
product of upper or lower triangular matrices of the same type as above.

\section{General notation and definitions}
\label{sec:generalities}
We refer to \cite{BourSetTh}, \cite{BourAlgI}, and \cite{Howie96}, respectively, for notation and terminology from set theory, algebra, and semigroup theory used but not defined here.

An order on a set $A$ is a binary relation $\le$ on $A$ which is reflexive, antisymmetric, transitive, and \textit{total}, in the sense that for all $a,b \in A$ we have either $a \le b$ or $b < a$, where $<$ is used for the strict order induced on $A$ by $\le$. We write $\ge$ and $>$, respectively, for the dual order of $\le$ and $<$, as usual.

If $\AA = (A, \cdot{})$ is a semigroup and $S_1, \ldots, S_n$ are subsets of $A$, we let
$S_1  \cdots  S_n $ denote the \textit{product set}, relative to $\AA$, of the $n$-tuple $(S_1, \ldots, S_n)$, namely the set $$\{a_1
 \cdots a_n: a_1 \in S_1, \ldots, a_n \in S_n\},$$ and we write it as $S^n$ when the $S_i$ are all equal to the same $S$. In
particular, if $a \in A$, $T
\subseteq  A$ and no confusion can arise, we use $aT$ for $\{a\} T$ and $Ta$ for $T\{a\}$.

\section{Preliminaries}\label{sec:basic_properties}
In what follows, unless otherwise specified, $\AA = (A, \cdot)$ is a fixed semigroup and $\le$ is an order on $A$ for which $\mathbb A_\sharp = (A, \cdot, \le)$ is an ordered semigroup.

In this section, we collect some results that will be essential, later in Section \ref{sec:main_results}, to prove the main contributions of the paper. Some are quite elementary, and their group
analogues are part of the folklore; however, we do not have a reference to something similar for semigroups, and thus we include them
here for the sake of exposition.
In particular, the proof (by induction) of the proposition below is straightforward from the definitions, and we may omit the details.

\begin{prop}\label{prop:elementary_properties}
The following holds:
\begin{enumerate}[label={\rm(\roman{*})}]
\item If $a_1, b_1, \ldots, a_n, b_n \in A$ and $a_1 \le b_1$, \ldots, $a_n \le b_n$, then $a_1 \cdots a_n \le b_1 \cdots b_n$; also, $a_1 \cdots a_n < b_1 \cdots b_n$ if $\mathbb A_\sharp$ is linearly ordered and $a_i < b_i$ for each $i$.
\item If $a, b \in A$ and $a \le b$, then $a^n \le b^n$ for all $n$, and in fact $a^n < b^n$ if $\mathbb A_\sharp$ is linearly ordered and $a < b$.
\item If $a \in A$ is such that $a^2 \le a$, then $a^n \le a^m$ for $m \le n$; moreover, $a^n < a^m$ if $\AA_\sharp$ is linearly ordered, $a^2 < a$ and $m < n$.
\end{enumerate}
\end{prop}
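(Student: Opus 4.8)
The plan is to derive all three items, each by induction on the obvious parameter, directly from the compatibility axiom \eqref{equ:totally_ordered_sgrp_a011}, together with its strict counterpart, which holds by definition whenever $\AA_\sharp$ is linearly ordered.

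For item (i), I would induct on $n$, the case $n = 1$ being vacuous. For the step, assume $a_1 \cdots a_{n-1} \le b_1 \cdots b_{n-1}$. Applying \eqref{equ:totally_ordered_sgrp_a011} twice — first multiplying on the right by $a_n$, then multiplying on the left by $b_1 \cdots b_{n-1}$ and using $a_n \le b_n$ — gives
$$
a_1 \cdots a_n \le (b_1 \cdots b_{n-1})\, a_n \le b_1 \cdots b_n ,
$$
and transitivity finishes the argument. The strict version follows from the very same chain with each ``$\le$'' turned into ``$<$'': this is legitimate in a linearly ordered semigroup, where $x < y$ forces $xz < yz$ and $zx < zy$, and the strengthened inductive hypothesis $a_1 \cdots a_{n-1} < b_1 \cdots b_{n-1}$ is exactly what is needed to start the chain. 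Item (ii) is then nothing but the special case of item (i) in which all the $a_i$ equal $a$ and all the $b_i$ equal $b$.

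For item (iii), the key preliminary observation is that $a^2 \le a$ already implies $a^{k+1} \le a^k$ for every $k$. Indeed, for $k = 1$ this is the hypothesis, while for $k \ge 2$ one has $a^{k+1} = a^2 a^{k-1}$ and $a^k = a\, a^{k-1}$ with the exponent $k-1$ still positive, so $a^{k-1}$ is a genuine element of $A$ and \eqref{equ:totally_ordered_sgrp_a011} applies to the inequality $a^2 \le a$ multiplied on the right by $a^{k-1}$. Consequently $a \ge a^2 \ge a^3 \ge \cdots$, and for $m \le n$ one reads off $a^n \le a^m$ by transitivity along the finite chain $a^m \ge a^{m+1} \ge \cdots \ge a^n$. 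The strict refinement is identical: $a^2 < a$ together with the strict form of \eqref{equ:totally_ordered_sgrp_a011} yields $a^{k+1} < a^k$ for all $k$, so the chain from $a^m$ down to $a^n$ is strictly decreasing whenever $m < n$.

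There is no real obstacle here — the argument is elementary throughout — but two small points deserve attention. First, in item (iii) one must keep the exponents positive, since $\AA$ is only assumed to be a semigroup and no identity element is available to absorb an exponent $0$. Second, the strict conclusions in (i)--(iii) genuinely rely on $\AA_\sharp$ being linearly ordered and cannot be extracted from \eqref{equ:totally_ordered_sgrp_a011} alone: an ordered semigroup may perfectly well satisfy $a < b$ yet $ac = bc$ (e.g., in the presence of an absorbing element), so the passage from non-strict to strict inequalities is not automatic.
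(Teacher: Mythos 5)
Your proof is correct and is exactly the routine induction that the paper itself describes and omits (``the proof (by induction) of the proposition below is straightforward from the definitions''). The only cosmetic quibbles: the base case $n=1$ of (i) is not vacuous but is literally the hypothesis $a_1 \le b_1$, and in the non-strict inductive steps one should note that \eqref{equ:totally_ordered_sgrp_a011} only covers the case of strict inequality in the hypothesis, the case of equality being handled trivially --- neither point affects the validity of the argument.
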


Pick an element $a \in A$. We say
that $a$ is cancellable (in $\AA$) if
both of the maps $A \to A: x \mapsto ax$ and $A \to A: x \mapsto xa$ are
one-to-one. The semigroup $\AA$ is then cancellative if each element of $A$ is
cancellable.

\begin{rem}
\label{rem:cancellative_if_linearly_orderable}
A cancellative semigroup is linearly orderable if and only if it is totally
orderable. Furthermore, any linearly orderable semigroup
is cancellative.

Thus, one thing seems worth mentioning before proceeding: While, on the one hand, every commutative cancellative semigroup embeds as a subsemigroup into a group (as it follows from the standard construction of the group of fractions of a commutative monoid; see \cite[Chapter I, Section 2.4]{BourAlgI}), nothing similar is true, on the other hand, in the non-commutative case, no matter if we restrict to linearly orderable finitely generated semigroups, as first noticed by R.~E.~Johnson \cite{Johnson69} on the basis of an example by A.~Malcev \cite{Malcev37}.

This is of fundamental importance here, as it shows that the study of sumsets in linearly ordered semigroups cannot be systematically reduced, in the absence of commutativity, to the case of groups (at least, not in any obvious way).
\end{rem}
On another hand, $a \in A$ is said to be
periodic (in $\AA$) if
there exist positive integers $n$ and $p$ such that $a^n = a^{n+p}$; we then refer to
the smallest $n$ with this property as the index of $a$ (in $\AA$) and to the smallest $p$
relative to such an $n$ as the period of $a$ (in $\AA$); see, e.g., \cite[p. 10]{Howie96}. In particular, $a$ is called idempotent (in $\AA$) if it has period and index equal to $1$, namely $a = a^2$, and we say that $\AA$ is torsion-free
if its only periodic elements are idempotent.

\begin{rem}\label{rem:idempotents_behave_like_identities}
The unique idempotent element of a cancellative monoid is the identity, so
that torsion-free groups are definitely a special type of torsion-free
semigroups; cf. Example \ref{exa:torsion-freeness}. Moreover, if $\mathbb A$ is cancellative and $a \in A$ is idempotent, then $\AA$ is unital (which applies especially to linearly orderable
semigroups, in view of Remark \ref{rem:cancellative_if_linearly_orderable}):
For, $a^2 = a$ implies $a^2 b = ab$ and $ba^2 = ba$ for every $b \in A$, hence $ab = ba = b$. This ultimately
proves that $a$ serves as the identity of $\AA$.
\end{rem}

The following proposition generalizes properties mentioned in \cite[Section 2]{Frei12}.
\begin{prop}\label{prop:optional}
Let $\mathbb A_\sharp$ be a linearly ordered semigroup. We have:
\begin{enumerate}[label={\rm(\roman{*})}]
\item\label{prop:optional:item_i} If $a \in A$ and $a^2 < a$, then $ab < b$ and $aba
< b$ for all $b \in A$.
\item\label{prop:optional:item_ii} If $aba = b$ for $a,b \in A$, then $\AA$ is
unital and $a$ is the identity of $\AA$.
\item\label{prop:optional:item_iii} None of the elements of $A$ has finite period unless
$\AA$ is unital and such an element is the identity. In particular,
$\AA$ is torsion-free.
\end{enumerate}
\end{prop}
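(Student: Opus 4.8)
The plan is to treat the three items in order---item~\ref{prop:optional:item_ii} will follow at once from item~\ref{prop:optional:item_i}, and item~\ref{prop:optional:item_iii} from both---relying throughout on two standing facts: a linearly ordered semigroup is cancellative (Remark~\ref{rem:cancellative_if_linearly_orderable}), and reversing the order turns $\AA_\sharp$ into another linearly ordered semigroup, since the compatibility condition \eqref{equ:totally_ordered_sgrp_a011}, read with strict inequalities, is invariant under swapping ``$<$'' and ``$>$''. This second point, which I would record explicitly at the outset, is the only one requiring a little care: it lets me invoke for free the order-reversed form of any statement proved from a hypothesis of the shape ``$a^2<a$'', simply by rerunning the argument in $(A,\cdot,\ge)$. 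Beyond that, none of the steps is substantial.

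For item~\ref{prop:optional:item_i}, I fix $a$ with $a^2<a$ and an arbitrary $b\in A$, and show $ab<b$ by eliminating the two other possibilities allowed by totality. Multiplying $a^2<a$ on the right by $b$ gives $a^2b<ab$; now if $ab>b$, then multiplying on the left by $a$ gives $a^2b=a(ab)>ab$, a contradiction, and if $ab=b$, then $a^2b=a(ab)=ab=b$, again contradicting $a^2b<ab=b$. Hence $ab<b$, and the mirror argument (legitimate by the left--right symmetry of \eqref{equ:totally_ordered_sgrp_a011}) gives $ba<b$ as well; then $aba=a(ba)<ab<b$, the first inequality coming from multiplying $ba<b$ on the left by $a$.

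For item~\ref{prop:optional:item_ii}, suppose $aba=b$. By totality exactly one of $a^2<a$, $a^2=a$, $a^2>a$ holds. If $a^2<a$, item~\ref{prop:optional:item_i} forces $aba<b$, contradicting $aba=b$; the case $a^2>a$ is ruled out the same way, using the order-reversed form of item~\ref{prop:optional:item_i}. Thus $a$ is idempotent, and since $\AA$ is cancellative the computation in Remark~\ref{rem:idempotents_behave_like_identities} (from $a^2=a$ one gets $a^2b=ab$ and $ba^2=ba$, hence $ab=ba=b$ for every $b$) shows that $\AA$ is unital with identity $a$.

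For item~\ref{prop:optional:item_iii}, let $a$ be periodic, say $a^n=a^{n+p}$ with $p\ge 1$. Since $n<n+p$, Proposition~\ref{prop:elementary_properties}(iii) yields $a^{n+p}<a^n$ in case $a^2<a$, and, reading in the dual order, $a^{n+p}>a^n$ in case $a^2>a$; both are impossible, so $a$ is idempotent and, as in item~\ref{prop:optional:item_ii}, is the identity of the (necessarily unital) semigroup $\AA$. The concluding assertion that $\AA$ is torsion-free is then immediate: every periodic element of $\AA$ equals its identity, which is idempotent.
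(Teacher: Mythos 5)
Your proof is correct and follows essentially the same route as the paper: totality plus cancellativity to get $ab<b$ from $a^2b<ab$, duality (order reversal) to reduce item (ii) to the case $a^2\le a$, and Remark~\ref{rem:idempotents_behave_like_identities} to conclude. The only cosmetic difference is in deriving $aba<b$ --- the paper multiplies $ab<b$ and $a^2<a$ to get $aba^2<ba$ and cancels an $a$, whereas you chain $a(ba)<ab<b$ --- and you usefully spell out the details of item (iii), which the paper leaves to the reader.
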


\begin{proof}
\ref{prop:optional:item_i} Pick $a,b \in A$ with $a^2 < a$. Then $a^2 b < ab$, whence
$ab < b$ by the totality of $\le$ and Remark
\ref{rem:cancellative_if_linearly_orderable}. It follows from Proposition
\ref{prop:elementary_properties} that $aba^2 < ba$; thus, $aba < b$ by
the same arguments as above.

\ref{prop:optional:item_ii} Let $a,b \in A$ be such that $aba = b$. By duality, we may suppose that $a^2 \le a$. If $a^2 < a$, then $aba < b$ by the previous point \ref{prop:optional:item_i}. Therefore, we must have $a^2 = a$,
which implies the claim by Remark \ref{rem:idempotents_behave_like_identities}.

\ref{prop:optional:item_iii} This is immediate from the above (we leave the details to the reader).
\end{proof}
The next proposition, of which we omit the proof,
is in turn an extension of an elementary property of the integers; see, for instance,
\cite[Exercise 1, p. 93]{Ruzsa09} and contrast with \cite[Theorem 1.1]{Frei12}.
\begin{prop}
\label{prop:lower_bound_on_product_with_many_factors}
Assume $\AA_\sharp$ is a
linearly ordered semigroup and let $S_1, \ldots, S_n$ be non-empty finite subsets of $A$. Then
\begin{equation}\label{equ:lower_bound_on_|S1S2Sn|_clone}
\textstyle |S_1 \cdots S_n|
\ge 1 -  n + \sum_{i=1}^n |S_i|.
\end{equation}
Moreover, \eqref{equ:lower_bound_on_|S1S2Sn|_clone} is sharp, the lower bound being
attained, e.g., by picking $a \in A$ and letting $S_i$ be, for each $i$, of the form $\{a, \ldots, a^{s_i}\}$ for some positive integer $s_i$.
\end{prop}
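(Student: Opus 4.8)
The plan is to argue by induction on $n$. The base case $n = 1$ is the trivial inequality $|S_1| \ge |S_1|$, and the inductive step reduces entirely to the two-set estimate
\[
|ST| \ge |S| + |T| - 1 \qquad \text{for all non-empty finite } S, T \subseteq A.
\]
Granting this, one writes $S_1 \cdots S_n = (S_1 \cdots S_{n-1}) S_n$, applies the two-set estimate with $S = S_1 \cdots S_{n-1}$ and $T = S_n$, and feeds in the inductive hypothesis for $S_1 \cdots S_{n-1}$; a one-line rearrangement then yields \eqref{equ:lower_bound_on_|S1S2Sn|_clone}.

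So the crux is the two-set estimate. Here I would enumerate $S = \{s_1, \ldots, s_p\}$ and $T = \{t_1, \ldots, t_q\}$ with $s_1 < \cdots < s_p$ and $t_1 < \cdots < t_q$, where $p = |S|$ and $q = |T|$, and exhibit the chain
\[
s_1 t_1 < s_1 t_2 < \cdots < s_1 t_q < s_2 t_q < s_3 t_q < \cdots < s_p t_q
\]
of elements of $ST$: the first $q - 1$ inequalities hold because left multiplication by $s_1$ is strictly increasing, the remaining $p - 1$ because right multiplication by $t_q$ is strictly increasing, both facts being precisely the hypothesis that $\AA_\sharp$ is linearly ordered. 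Since a chain in a totally ordered set has pairwise distinct entries, this displays $p + q - 1$ distinct elements of $ST$, whence $|ST| \ge |S| + |T| - 1$.

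For the sharpness assertion I would fix a non-idempotent element $a \in A$ --- if none exists then, $\AA$ being cancellative by Remark \ref{rem:cancellative_if_linearly_orderable}, it is a one-element semigroup and \eqref{equ:lower_bound_on_|S1S2Sn|_clone} holds trivially --- so that the powers $a, a^2, a^3, \ldots$ are pairwise distinct by Proposition \ref{prop:elementary_properties} (possibly after replacing $\le$ with its dual order). Putting $S_i = \{a, a^2, \ldots, a^{s_i}\}$ gives $|S_i| = s_i$, while any product $a^{j_1} \cdots a^{j_n}$ with $1 \le j_i \le s_i$ equals $a^{j_1 + \cdots + j_n}$ and the exponent sweeps out every integer from $n$ to $s_1 + \cdots + s_n$; hence $S_1 \cdots S_n = \{a^k : n \le k \le s_1 + \cdots + s_n\}$ has exactly $s_1 + \cdots + s_n - n + 1$ elements, matching the right-hand side of \eqref{equ:lower_bound_on_|S1S2Sn|_clone}.

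I do not anticipate any real obstacle here: the whole argument is driven by the single fact that multiplication in $\AA_\sharp$ is strictly monotone in each argument. The only spot asking for a moment's care is the sharpness part, where one must first pin down a non-idempotent element before the sets $\{a, a^2, \ldots, a^{s_i}\}$ behave as intended; for the main inequality, the chain displayed above is essentially all there is to say.
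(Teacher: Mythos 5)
Your proof is correct and is precisely the standard argument the paper intends: the paper omits the proof, describing the inequality as an extension of an elementary property of the integers and pointing to the same powers-of-$a$ construction (via Proposition \ref{prop:optional}(iii)) for the sharpness claim. Both your chain $s_1t_1 < \cdots < s_1t_q < s_2t_q < \cdots < s_pt_q$ for the two-set estimate and your disposal of the degenerate all-idempotent case (which forces $|A|=1$ by cancellativity) are sound.
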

In particular, the second part of Proposition \ref{prop:lower_bound_on_product_with_many_factors} follows from considering that, if $\AA$ is a linearly orderable non-trivial non-empty semigroup, point \ref{prop:optional:item_iii} of Proposition
\ref{prop:optional} provides at least one
element $a \in A$
such that $a^{j_1} \ne a^{j_2}$ for all distinct integers $j_1, j_2 \ge 1$.

Now we prove the generalizations of \cite[Lemma 2.2]{Frei12} and \cite[Proposition 2.4]{Frei12} alluded to in the introduction, while noticing that, if $\AA$ is a group with identity $1$ and $a,b \in
A$, then $[a^n, b] = 1$, for some $n$, if and only if
$a^n b = ab^n$ (the square brackets denote a commutator).
\begin{prop}\label{prop:generalized_neumann}
Let $\AA_\sharp$ be a linearly ordered semigroup and pick
$a,b \in A$. If $ab < ba$ then for every $n$ we have
\begin{equation}
\label{equ:inequality_long}
a^n b < a^{n-1} b a <
\cdots < aba^{n-1} < ba^n.
\end{equation}
\end{prop}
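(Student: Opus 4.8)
The plan is to read the chain \eqref{equ:inequality_long} off as a string of consecutive strict inequalities and to verify each link separately by cancelling a common prefix and a common suffix, so that everything reduces to the single hypothesis $ab < ba$. Precisely, for $0 \le k \le n$ put $c_k := a^{n-k} b a^{k}$, with the understanding that a factor $a^0$ is simply dropped; thus $c_0 = a^n b$, $c_n = b a^n$, and $c_k$ is exactly the $(k+1)$-st term of \eqref{equ:inequality_long}. The proposition then amounts to the claim that $c_0 < c_1 < \cdots < c_n$, and since $<$ is transitive it is enough to prove $c_k < c_{k+1}$ for every $k \in \{0, \dots, n-1\}$.

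First I would record the obvious fact that, in a linearly ordered semigroup, $x < y$ implies $uxv < uyv$ for all $u, v \in A$, together with the one-sided versions $xv < yv$ and $ux < uy$; this is immediate from the strict form of \eqref{equ:totally_ordered_sgrp_a011} built into the definition of ``linearly ordered semigroup'', applied first on the right and then on the left. Granting this, the link $c_k < c_{k+1}$ is obtained from $ab < ba$ as follows: multiplying on the right by $a$ a total of $k$ times (and doing nothing when $k = 0$) gives $a b a^{k} < b a^{k+1}$, and then multiplying on the left by $a$ a total of $n - k - 1$ times (and doing nothing when $k = n-1$) gives $a^{n-k} b a^{k} < a^{n-k-1} b a^{k+1}$, i.e. $c_k < c_{k+1}$. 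Chaining these for $k = 0, 1, \dots, n-1$ produces \eqref{equ:inequality_long}.

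I do not anticipate a genuine difficulty in this argument; the only points that need a little attention are bookkeeping ones — keeping the exponents $n - k - 1$ and $k$ nonnegative throughout, so that one never has to appeal to an identity element $a^0$ (which need not exist, as $\mathbb A_\sharp$ may fail to be unital), and noting that for $n = 1$ the chain \eqref{equ:inequality_long} degenerates to the hypothesis itself. One could instead run an induction on $n$, peeling off the leftmost (or rightmost) factor $a$ from every term of the chain, but the direct ``cancel a common prefix and a common suffix'' approach above seems the most transparent.
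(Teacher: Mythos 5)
Your proof is correct and rests on exactly the same ingredient as the paper's, namely the strict compatibility of the order with left and right multiplication; the only difference is organizational: you verify each consecutive link $c_k < c_{k+1}$ directly by padding $ab < ba$ with a prefix $a^{n-k-1}$ and a suffix $a^{k}$, whereas the paper runs an induction on $n$, multiplying the whole length-$n$ chain on the left by $a$ and its last term on the right by $a$. Your care with the degenerate exponents (never invoking $a^0$, since $\mathbb{A}_\sharp$ need not be unital) is exactly the right bookkeeping, and no gap remains.
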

\begin{proof}
Assume that equation \eqref{equ:inequality_long} is true
for some $n$. Then, multiplying by $a$ on the left gives
$a^{n+1} b < a^n b a < \cdots < a^2 b a^{n-1} < aba^n$, while
multiplying by $a$ on the right yields $aba^n < ba^{n+1}$. Since $ab <
ba$, the transitivity of $\le$ implies the claim by induction.
\end{proof}

The proof of Lemma \ref{lem:ab=ba} is now an immediate
consequence of Proposition \ref{prop:generalized_neumann} (by duality, if
$\AA_\sharp$ is a linearly ordered semigroup and $a,b \in A$
then we may assume $ab \le ba$ without loss of generality), so we come to Lemma \ref{lem:lower_bounds0}.

\begin{proof}[Proof of Lemma \ref{lem:lower_bounds0}]
Assume to the contrary that $yS = Sy$. Since $y \notin \CC_\AA(S)$,
we can find an element $a_1 \in S$ such that $a_1 y \ne ya_1$, which in turn implies that there exists $a_2 \in S \setminus \{a_1\}$ such
that $ya_1 = a_2y$. Then, using that $S$ is a finite set, we get a
maximum integer $k \ge 2$ and elements $a_1, \ldots, a_k \in S$ such that
\begin{enumerate}[label={\rm (\roman{*})}]
\item $ ya_i = a_{i+1} y$ for $i = 1,
\ldots, k-1$;
\item the $a_i$ are pairwise distinct for $i = 1, \ldots, k$.
\end{enumerate}
Hence, the maximality of $k$ and $yS = Sy$ imply
$ya_k = a_h y$ for some $h = 1, \ldots, k$, with the result that $y^{i+1} a_k
= a_{h+i} y^{i+1}$ for every $i = 0, \ldots, k-h$ (by induction). In particular, it holds $y^{k-h+1}
a_k = a_k y^{k-h+1}$. Therefore, $ya_k = a_k y$ (by Lemma \ref{lem:ab=ba}),
and in fact $ya_k = ya_{k-1}$ (since $a_ky = y a_{k-1}$, by construction).

So,
Remark \ref{rem:cancellative_if_linearly_orderable} yields $a_k = a_{k-1}$,
which is however absurd because $a_i \ne a_j$ for all $i,j = 1, \ldots, k$ with $i \ne
j$. The proof is thus complete.
\end{proof}

We conclude the section with the following:

\begin{proof}[Proof of Theorem \ref{thm:normal_central}]
The claim is obvious if $S$ is empty, so assume $S \ne \emptyset$. Given $y \in
\NN_\AA(S)$ we have $yS = Sy$, and Lemma
\ref{lem:lower_bounds0} implies $y \in \CC_\AA(S)$, whence we get $\NN_\AA(S) \subseteq \CC_\AA(S)$. The other inclusion is straightforward.
\end{proof}
\section{The main result}\label{sec:main_results}
Throughout, $\AA = (A, \cdot)$ denotes a fixed semigroup (unless otherwise specified). We start with a
series of three lemmas: The two first apply to cancellative semigroups in
general, while the latter is specific to linearly orderable semigroups.

\begin{lem}\label{lem:cancellative_implies_abelian}
Let $\AA$ be a cancellative semigroup and $S$ a finite
subset of $A$ such that $\langle S \rangle $ is abelian.
If $y \in A \setminus \CC_\AA(S)$, then $S^2$ is disjoint from $yS
\cup Sy$.
\end{lem}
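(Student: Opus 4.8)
The plan is to argue by contradiction. Suppose $S^2$ meets $yS \cup Sy$; by the left-right symmetry of the hypotheses (replacing $\AA$ by its opposite semigroup interchanges $yS$ and $Sy$ while preserving cancellativity, the abelianness of $\langle S\rangle$, and membership in $A\setminus\CC_\AA(S)$), we may assume without loss of generality that there is an element lying in $S^2 \cap yS$. So write $ab = yc$ with $a,b,c\in S$. Since $\langle S\rangle$ is abelian, $a$ and $b$ commute, and more generally every pair of elements drawn from $S$ commutes; I will use this repeatedly and silently.

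The key step is to manufacture, from the single relation $ab = yc$, a relation of the form $y^n d = d y^n$ (equivalently $y^n$ commuting with some element of $S$), from which Lemma \ref{lem:ab=ba} forces $y$ to commute with that element, and then to propagate this to all of $S$ using the commutativity of $\langle S\rangle$ together with cancellation. Concretely, from $yc = ab = ba$ one gets (multiplying by $c$ on the right and using $ab,ba\in\langle S\rangle$ abelian) $yc^2 = abc = (bc)a = a(bc)$, and iterating, $y c^n = a b c^{n-1} = \cdots$, i.e.\ $y c^n \in \langle S\rangle$ for every $n$, with an explicit representative that is a product of $n+1$ elements of $\{a,b,c\}$ in any order. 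The point is then to also compute $c^n y$: here I would use that $c$ commutes with $a$ and $b$ inside $\langle S\rangle$ but I must be careful because $y\notin\langle S\rangle$ in general. The cleaner route is: from $yc = ab$, cancel to compare $y\cdot(\text{something in }S)$ with $(\text{something in }S)\cdot y$, aiming to land in the situation $ya' = a'y$ for some $a'\in S$; once we have that, Lemma \ref{lem:lower_bounds0} (contrapositive, applied to the singleton or to $S$ itself) combined with $y\notin\CC_\AA(S)$ yields the contradiction directly, or alternatively we rerun the normalizer-type argument from the proof of Lemma \ref{lem:lower_bounds0}.

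The main obstacle, and the step I expect to require real care, is precisely that $y$ need not lie in $\langle S\rangle$, so the abelianness hypothesis does not immediately let us move $y$ past elements of $S$. The relation $ab=yc$ is the only bridge, and one has to use cancellation to extract from it a \emph{two-sided} commutation relation between $y$ (or a power of $y$) and some fixed element of $S$. I would try: from $ab = yc$ deduce $bay = aby = (yc)y$ on one side and $yab = y(yc) = y^2c$ on the other; since $ab = ba$, we get $y\cdot ab = ab\cdot y$ would already finish things, but that is not given — instead we only know $ab = yc$, so $y\cdot ab = y^2 c$ and $ab\cdot y = yc\cdot y = y(cy)$, and these are equal iff $yc\cdot y = y\cdot yc$, i.e.\ iff $cy = yc$ (by left cancellation), which is exactly what we want to prove. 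So the honest version must instead play off two representatives of the same sumset element, or use finiteness of $S$ to build a cycle $ya_i = a_{i+1}y$ as in Lemma \ref{lem:lower_bounds0}: starting from $yc = ab = a_1 a_2$ with $a_1,a_2\in S$, one reinterprets this as data about how left-multiplication by $y$ acts, and, using that $\langle S\rangle$ is abelian so that $a_1a_2 = a_2a_1$, one shows the orbit of $y$-conjugation on $S$ must close up, producing $y^m a = a y^m$ for suitable $m\ge 1$ and $a\in S$; then Lemma \ref{lem:ab=ba} gives $ya = ay$, and finally, from $ab = yc = ya$-type consequences plus cancellation, one gets that $y$ centralizes \emph{every} element of $S$, contradicting $y\in A\setminus\CC_\AA(S)$. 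The details of closing up the orbit and then spreading commutativity from one element of $S$ to all of $S$ via the abelianness of $\langle S\rangle$ are where the work lies.
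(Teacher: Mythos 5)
Your proposal does not reach a complete proof, and the strategies you fall back on would not work even if executed. First, the lemma is stated for \emph{cancellative} semigroups, with no order in sight; your plan leans on Lemma \ref{lem:ab=ba} and Lemma \ref{lem:lower_bounds0}, both of which require linear orderability, so at best you would prove a weaker statement than the one asked. Second, the orbit-closing argument you import from the proof of Lemma \ref{lem:lower_bounds0} is not available here: that argument needs $yS = Sy$ to guarantee that each $ya_i$ can be rewritten as $a_{i+1}y$, whereas here you only have the single relation $ab = yc$, which gives no control over how left multiplication by $y$ interacts with the rest of $S$. You candidly note that your direct manipulations reduce to ``$cy = yc$, which is exactly what we want to prove,'' and the remaining steps are left as ``where the work lies'' --- so the proposal is a survey of attempts rather than a proof.

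The idea you are missing is to commute $ab$ not with $y$ (which is not given) but with elements of $S$ (which is given, since $ab \in \langle S\rangle$ and $\langle S\rangle$ is abelian), and then substitute $ab = yc$ and cancel. Concretely, from $c(ab) = (ab)c$ you get $c(yc) = (yc)c = yc^2$, and cancelling $c$ on the right yields $cy = yc$ in one step --- no powers of $y$, no induction, no order. Then for arbitrary $x \in S$, the relation $x(ab) = (ab)x$ becomes $xyc = ycx = yxc$ (using $cy = yc$ and $cx = xc$), and cancelling $c$ gives $xy = yx$; hence $y \in \CC_\AA(S)$, the desired contradiction. This is essentially the paper's argument (stated there with $ab = cy$, the other case following by duality), and it uses only cancellativity and the abelianness of $\langle S\rangle$, exactly as the hypotheses demand.
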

\begin{proof}
Pick $y \in A \setminus \CC_\AA(S)$ and assume for the sake of contradiction that $S^2$ is not disjoint from $yS \cup Sy$. Without
loss of generality, there then exist $a,b,c \in S$ such that $ab = cy$. Since $\langle S
\rangle $ is abelian, this gives that $cyc = abc = cab$, whence $ab
= yc$ (using that $\AA$ is cancellative), and finally $cy = yc$.

We claim that $xy = yx$ for all $x \in S$.
For, let $x \in S$. On the one hand, we have $abx = cyx = ycx = yxc$ (as we
have just seen that $cy = yc$). On the other hand, $xab = xcy = xyc$. But $abx = xab$
(again, by the commutativity of $\langle S \rangle $). So, in the end,
$yxc = xyc$, and hence $yx = xy$ (by the cancellativity of $c$). It follows that $y
\in \CC_\AA(S)$, which is absurd.
\end{proof}
\begin{lem}\label{lem:abel_makes_everything_commute}
Let $\AA$ be a cancellative semigroup and pick elements $a,b,x,y,z
\in A$ such that $x,y,z \in \CC_\AA(b)$ and $xy = az$ (respectively,
$xy = za$). Then $ab = ba$.
\end{lem}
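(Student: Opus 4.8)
The plan is to reduce everything to the observation that, in an arbitrary semigroup, the centralizer $\CC_\AA(b)$ of an element $b$ is a subsemigroup of $\AA$: if $xb = bx$ and $yb = by$, then $(xy)b = x(yb) = x(by) = (xb)y = (bx)y = b(xy)$, and this needs no cancellativity. Granting this, since $x, y \in \CC_\AA(b)$ by hypothesis we get $xy \in \CC_\AA(b)$, and hence $az \in \CC_\AA(b)$ in the first case (resp. $za \in \CC_\AA(b)$ in the second), because $xy = az$ (resp. $xy = za$). So I would be left with the task of deducing $ab = ba$ from the facts that $z$ commutes with $b$ and that $az$ (resp. $za$) commutes with $b$.

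In the case $xy = az$: from $az \in \CC_\AA(b)$ I would write $b(az) = (az)b$; here the left-hand side equals $(ba)z$ by associativity, while the right-hand side equals $a(zb) = a(bz) = (ab)z$, the middle step using $z \in \CC_\AA(b)$. This gives $(ba)z = (ab)z$, and cancelling $z$ on the right — which is legitimate since $\AA$ is cancellative — yields $ab = ba$. In the case $xy = za$: from $za \in \CC_\AA(b)$ I would instead write $b(za) = (za)b$; now the left-hand side equals $(bz)a = (zb)a = z(ba)$, again using $z \in \CC_\AA(b)$, whereas the right-hand side equals $z(ab)$, so cancelling $z$ on the left gives $ab = ba$.

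I do not expect any genuine obstacle here: the verification that $\CC_\AA(b)$ is closed under the operation is the one-line computation above, and the rest is a short string of associativity rewrites, three applications of the commutation hypotheses, and one appeal to cancellativity. The only point requiring a moment's care is that the final cancellation must be performed on the right in the first case and on the left in the second, matching the side on which $a$ appears in the relation $xy = az$ versus $xy = za$.
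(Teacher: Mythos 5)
Your argument is correct and is essentially the paper's own proof: the observation that $xy\in\CC_\AA(b)$ (because the centralizer is closed under the product) is exactly the paper's step $bxy = xyb$, the rewriting $azb = abz$ via $zb = bz$ is identical, and the conclusion follows by cancelling $z$ on the appropriate side. The only cosmetic difference is that the paper dispatches the case $xy = za$ by duality rather than writing it out.
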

\begin{proof}
By duality, we just consider the case when $xy = az$. On the one hand, $xyb = azb = abz$ since $zb = bz$; on the other hand, $baz = bxy =
xyb$ since $x,y \in \CC_\AA(b)$. Hence $abz = baz$, that is $ab = ba$
(by the cancellativity of $z$).
\end{proof}
Now, assume for the remainder of the section that $\AA$ is turned into an ordered semigroup by a certain order $\le$, and set $\AA_\sharp = (A, \cdot, \le)$ for brevity.
\begin{lem}\label{lem:lower_bounds_on_a_union}
Let $\AA_\sharp$ be linearly ordered, and let $S$ be
a non-empty finite subset of $A$. Pick $y \in A
\setminus \CC_\AA(S)$. If $\langle S \rangle $ is abelian,
then $$|S^2
\cup yS \cup Sy| \ge 3|S|.$$
\end{lem}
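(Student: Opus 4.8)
The plan is to combine three facts: the three sets $S^2$, $yS$, and $Sy$ are pairwise ``almost disjoint'' in a controlled way, each has size at least $|S|$, and their union therefore has size close to $3|S|$. More precisely, write $n = |S|$. By Proposition~\ref{prop:lower_bound_on_product_with_many_factors} (or just the linear-order hypothesis) we have $|S^2| \ge 2n-1$, while clearly $|yS| = |Sy| = n$ since $\AA$ is cancellative (Remark~\ref{rem:cancellative_if_linearly_orderable}). Lemma~\ref{lem:cancellative_implies_abelian} tells us that $S^2$ is disjoint from $yS \cup Sy$. So the only overlap to worry about is between $yS$ and $Sy$, and here Lemma~\ref{lem:lower_bounds0} gives $|yS \cup Sy| \ge n+1$. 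Putting these together, $|S^2 \cup yS \cup Sy| = |S^2| + |yS \cup Sy| \ge (2n-1) + (n+1) = 3n$, which is exactly the claim.

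So the skeleton is short, and the first thing I would do is write down these three inequalities and the disjointness statement, then add them. The subtlety is that the naive bound $|S^2| \ge 2n-1$ only gives $3n-1$ after adding $|yS \cup Sy| \ge n$, so one genuinely needs the \emph{simultaneous} improvement: either $|S^2| \ge 2n$ or $|yS \cup Sy| \ge n+1$, and ideally one wants to know they can't both fail. The clean way is to note that Lemma~\ref{lem:lower_bounds0} unconditionally gives $|yS \cup Sy| \ge n+1$ (since $y \notin \CC_\AA(S)$), so we don't need any case analysis on $|S^2|$ at all — the bound $|S^2| \ge 2n-1$ already suffices. I would double-check that $S^2$, $yS$, $Sy$ are the relevant sets and that Lemma~\ref{lem:cancellative_implies_abelian} applies verbatim: it requires $\AA$ cancellative (true, by Remark~\ref{rem:cancellative_if_linearly_orderable}), $\langle S\rangle$ abelian (hypothesis), and $y \notin \CC_\AA(S)$ (hypothesis), and its conclusion is precisely that $S^2 \cap (yS \cup Sy) = \emptyset$.

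The main obstacle, such as it is, is making sure the lower bound $|S^2| \ge 2|S|-1$ is actually available in this generality. It follows from Proposition~\ref{prop:lower_bound_on_product_with_many_factors} with $n=2$ and $S_1 = S_2 = S$, giving $|S^2| \ge 1 - 2 + 2|S| = 2|S| - 1$; this is stated for linearly ordered semigroups, which is our setting. One should also confirm that $|yS| = |S|$: the map $x \mapsto yx$ is injective on $A$ because $\AA$ is cancellative, so it restricts to a bijection $S \to yS$; similarly for $Sy$. With those two bookkeeping points in place, the proof is a one-line addition of inequalities.

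\begin{proof}[Proof of Lemma~\ref{lem:lower_bounds_on_a_union}]
Set $n := |S|$. Since $\AA$ is linearly orderable, it is cancellative by Remark~\ref{rem:cancellative_if_linearly_orderable}, so the maps $x \mapsto yx$ and $x \mapsto xy$ are injective on $A$; hence $|yS| = |Sy| = n$. By Lemma~\ref{lem:lower_bounds0}, the hypothesis $y \in A \setminus \CC_\AA(S)$ gives $|yS \cup Sy| \ge n+1$. Next, $\langle S \rangle$ is abelian and $\AA$ is cancellative, so Lemma~\ref{lem:cancellative_implies_abelian} yields that $S^2$ is disjoint from $yS \cup Sy$. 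Finally, applying Proposition~\ref{prop:lower_bound_on_product_with_many_factors} with $S_1 = S_2 = S$ gives $|S^2| \ge 2n-1$. Therefore
\[
|S^2 \cup yS \cup Sy| = |S^2| + |yS \cup Sy| \ge (2n-1) + (n+1) = 3n = 3|S|,
\]
as desired.
\end{proof}
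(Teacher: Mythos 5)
Your proof is correct and follows exactly the paper's argument: disjointness of $S^2$ from $yS \cup Sy$ via Lemma~\ref{lem:cancellative_implies_abelian}, then adding the bounds $|S^2| \ge 2|S|-1$ from Proposition~\ref{prop:lower_bound_on_product_with_many_factors} and $|yS \cup Sy| \ge |S|+1$ from Lemma~\ref{lem:lower_bounds0}. The extra remarks about $|yS| = |Sy| = |S|$ are harmless but unnecessary, since Lemma~\ref{lem:lower_bounds0} already delivers the union bound directly.
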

\begin{proof}
The inclusion-exclusion principle, Remark \ref{rem:cancellative_if_linearly_orderable} and Lemma \ref{lem:cancellative_implies_abelian} give
\begin{displaymath}
|S^2 \cup yS \cup Sy| = |S^2| + |yS \cup Sy| - |S^2 \cap (yS \cup Sy)| = |S^2| +
|yS \cup Sy|,
\end{displaymath}
which is enough to complete the proof on account of the fact that
$|S^2| \ge 2|S|-1$, by Proposition \ref{prop:lower_bound_on_product_with_many_factors}, and
$|yS \cup Sy| \ge |S|+1$, by
Lemma \ref{lem:lower_bounds0}.
\end{proof}
So at long last we are ready to prove the main theorem of the paper.
\begin{proof}[Proof of Theorem \ref{th:main}]
Write $I_m$ for $\{1, \ldots, m\}$, where $m = |S|$, and let $a_1, \ldots, a_m$ be
a numbering of $S$ for which $a_1
< \cdots < a_m$. It is evident that $m \ge 2$. If $m = 2$ then $|S^2|
\le 3$, and in fact $|S^2| = 3$ by Proposition \ref{prop:lower_bound_on_product_with_many_factors}. Since
$a_1^2 < a_1 a_2 < a_2^2$ and $a_1^2 < a_2 a_1 < a_2^2$, it
follows that $S^2 = \{a_1^2, a_1 a_2, a_2^2\}$ and $a_1 a_2 = a_2 a_1$, which
implies that $\langle S \rangle $ is abelian, as desired.

So, in what follows, let $m \ge 3$ and suppose that $\langle B
\rangle $ is abelian for every subset $B$ of $A$ for which $2 \le |B| < m$ and $|B^2| \le 3|B| - 3$. Furthermore, assume by contradiction that $\langle S \rangle $ is \textit{not} abelian,
and accordingly denote by $i$ the maximum integer in $I_m$ such that $\langle T
\rangle $ is abelian for $T = \{a_1,  \ldots, a_i\}$. Then $1
\le i < m$ and $a_{i+1} \notin \CC_\AA(T)$, so on the one hand
\begin{equation}\label{equ:T^2cap_ai+1T}
T^2 \cap (a_{i+1} T \cup Ta_{i+1}) = \emptyset,
\end{equation}
thanks to Remark \ref{rem:cancellative_if_linearly_orderable} and Lemma
\ref{lem:cancellative_implies_abelian}, and on the other hand
\begin{equation}\label{equ:lower_bound_on_T^2UyTUTy}
|T^2 \cup a_{i+1} T \cup Ta_{i+1}| \ge 3i,
\end{equation}
by virtue of Lemma \ref{lem:lower_bounds_on_a_union}. Also, there exists a
positive integer $j \le i$ such that
\begin{equation}\label{equ:ai+1_doesnt_commute_with_a_j}
a_{i+1} a_j \ne a_j a_{i+1},
\end{equation}
which is chosen here to be as great as possible, in such a way that
\begin{equation}\label{equ:maximality_of_j}
x a_{i+1} = a_{i+1} x\text{ for every } x \in T \text{
with } a_j < x.
\end{equation}
We have that $a_j \notin \CC_\AA(V)$, where $V = S \setminus T =
\{a_{i+1}, \ldots, a_m\}$, and
\begin{equation}\label{equ:outsiders}
V^2 \cap (T^2 \cup a_{i+1} T \cup T a_{i+1}) = \emptyset
\end{equation}
since $a_h a_k < a_{i+1}^2 \le a_r a_s$ for all indices $h,k,r,s \in I_m$ with
$h+k \le 2i+1$ and $i+1 \le \min(r,s)$. Then the inclusion-exclusion principle,
together with \eqref{equ:lower_bound_on_T^2UyTUTy} and the standing assumptions, gives
that
\begin{displaymath}
|V^2| \le |S^2| - |T^2 \cup a_{i+1} T \cup Ta_{i+1}| \le 3m-3 - 3i = 3|V| - 3.
\end{displaymath}
Thus $2 \le |V| < m$, and $\langle V \rangle $ is abelian (by the inductive hypothesis). Then
\begin{equation}\label{equ:V^2cap_ajVcupVaj}
V^2 \cap (a_j V \cup Va_j) = \emptyset,
\end{equation}
in view of Remark \ref{rem:cancellative_if_linearly_orderable}, Lemma
\ref{lem:cancellative_implies_abelian} and the fact that $a_j \notin \CC_\AA(V)$. We claim
\begin{equation}\label{equ:T2cap_ajV}
T^2 \cap (a_j V \cup Va_j) = \emptyset.
\end{equation}
For, assume to the contrary, with no loss of generality, that $T^2 \cap a_j V
\ne \emptyset$, namely $xy = a_j z$ for some $x,y \in T$ and $z \in V$. Using that $y
< z$, this yields $a_j < x$, and similarly $a_j < y$ as $\langle T
\rangle $ is abelian (so that $xy = yx$, and hence $yx =
a_j z$). It then follows from \eqref{equ:maximality_of_j} and the commutativity of
$\langle V \rangle $ that $x,y,z \in \CC_\AA(a_{i+1})$.
Thus, we get $a_{i+1} a_j = a_j a_{i+1}$ by Lemma \ref{lem:abel_makes_everything_commute}, which however contradicts \eqref{equ:ai+1_doesnt_commute_with_a_j} and
implies \eqref{equ:T2cap_ajV}.

With that said, let $x \in T$ and $y \in V$ be such that $xa_{i+1} = a_j y$. Since
$a_{i+1} \le y$, it is clear that $a_j \le x$. Suppose for the sake
of contradiction that $a_j < x$. Then we get from
\eqref{equ:maximality_of_j} and the commutativity of $\langle V
\rangle $ that $x, a_{i+1}, y \in \CC_\AA(a_{i+1})$, with the result that $a_j a_{i+1} = a_{i+1} a_j$ (by Lemma
\ref{lem:abel_makes_everything_commute}). But this is in open contrast with
\eqref{equ:ai+1_doesnt_commute_with_a_j}, and it is enough to argue that
\begin{displaymath}
T a_{i+1} \cap a_j V = \{a_j a_{i+1}\}.
\end{displaymath}
Thus, the inclusion-exclusion principle gives that
\begin{equation}\label{equ:ultimate_lower_bound}
|Ta_{i+1} \cup a_j V| = |Ta_{i+1}| + |a_j V| - |T a_{i+1} \cap a_j V| =
m-1,
\end{equation}
which in turn implies, together with \eqref{equ:T^2cap_ai+1T},
\eqref{equ:outsiders}, \eqref{equ:V^2cap_ajVcupVaj} and \eqref{equ:T2cap_ajV},
that
\begin{displaymath}
|T^2 \cup V^2 \cup Ta_{i+1} \cup a_j V| = |T^2| + |V^2| + |Ta_{i+1} \cup a_j V|.
\end{displaymath}
It follows from Proposition \ref{prop:lower_bound_on_product_with_many_factors} and
\eqref{equ:ultimate_lower_bound} that
\begin{displaymath}
|T^2 \cup V^2 \cup Ta_{i+1} \cup a_j V| \ge (2i - 1) + (2m - 2i - 1) + (m-1) =
3m - 3.
\end{displaymath}
As $|S^2| \le 3m-3$ and $T^2 \cup V^2 \cup Ta_{i+1} \cup a_j V \subseteq S^2$,
it is then proved that
\begin{equation}\label{equ:mucchio}
S^2 = T^2 \cup V^2 \cup Ta_{i+1} \cup a_j V.
\end{equation}
So to conclude, let us define $a = a_{i+1} a_j$. By
\eqref{equ:T^2cap_ai+1T} and \eqref{equ:outsiders}, it is straightforward that $a
\notin T^2 \cup V^2$, and we want to show that $a \notin Ta_{i+1} \cup a_j V$ to
reach a contradiction. For, observe that, by
\eqref{equ:ai+1_doesnt_commute_with_a_j} and Lemma
\ref{lem:lower_bounds0}, there exist $x \in T$ and $y \in V$
such that
\begin{equation}\label{equ:a_few_more_steps}
a_{i+1} x \notin Ta_{i+1},\quad y a_j \notin a_j V.
\end{equation}
Since $a_{i+1} x, y a_j \notin T^2 \cup V^2$ by
\eqref{equ:T^2cap_ai+1T}, \eqref{equ:outsiders}, \eqref{equ:V^2cap_ajVcupVaj}
and \eqref{equ:T2cap_ajV}, it then follows from \eqref{equ:mucchio} that $a_{i+1}
x \in a_j V$ and $ y a_j \in Ta_{i+1}$, so we find $b \in V$ and $c \in T$ such that
\begin{equation}\label{equ:commuting_equations}
a_j b = a_{i+1} x,\quad y a_j = c\;\!a_{i+1}.
\end{equation}
Suppose that $a \in Ta_{i+1}$, i.e. there exists $z \in T$
for which $z a_{i+1} = a_{i+1} a_j$.

We get from \eqref{equ:ai+1_doesnt_commute_with_a_j} that $z \ne a_j$. If $a_j < z$ then \eqref{equ:maximality_of_j} yields $z \in \CC_\AA(a_{i+1})$, and Lemma \ref{lem:abel_makes_everything_commute} implies $a_{i+1} a_j
= a_j a_{i+1}$, again in
contradiction to \eqref{equ:ai+1_doesnt_commute_with_a_j}. Thus $z < a_j$.

In addition, $x \le a_j$, since otherwise $a_{i+1} x = x
 a_{i+1} \in T a_{i+1}$ in view of \eqref{equ:maximality_of_j}, in contradiction to
\eqref{equ:a_few_more_steps}. Considering that $\langle T \rangle $ is
abelian, it follows from \eqref{equ:commuting_equations} that
$$a_j b a_j =
a_{i+1} x   a_j = a_{i+1} a_j x.$$
But $a_{i+1} a_j =  z
a_{i+1}$, so at the end $a_j b a_j = z  a_{i+1} x$. Hence,
$b a_j < a_{i+1} x$ as $z < a_j$, which is absurd as
$a_{i+1} \le b$ and $x \le a_j$, viz $ a_{i+1}
x \le b a_j$. This gives $a \notin
Ta_{i+1}$.

Finally, assume that $a \in a_j V$, i.e. there exists $w \in V$ such that
$a_{i+1} a_j = a_j w$. By construction of $V$, we have $a_{i+1} \le w$, and in fact
$a_{i+1} < w$ by \eqref{equ:ai+1_doesnt_commute_with_a_j}. We want to show
that $c \le a_j$. For, suppose to the contrary that $a_j <
c$.
The commutativity of $\langle V \rangle $, together with
\eqref{equ:maximality_of_j}, then yields that $c, a_{i+1}, y \in \CC_\AA(a_{i+1})$, so $a_{i+1} a_j = a_j a_{i+1}$ by
\eqref{equ:commuting_equations} and Lemma
\ref{lem:abel_makes_everything_commute}; this contradicts
\eqref{equ:ai+1_doesnt_commute_with_a_j}, and hence $c \le a_j$. Using once
more that $\langle V \rangle $ is abelian, it is then immediate from
\eqref{equ:commuting_equations} that
$$a_{i+1} c  a_{i+1} = a_{i+1}y  a_j
= y  a_{i+1} a_j,$$
so $a_{i+1} c{ }a_{i+1} = y a_j w$ since
$a_{i+1} a_j = a_j w$. But, as argued before, $a_{i+1} < w$, whence it is
seen that $y a_j < a_{i+1} c$, which is absurd because
$a_{i+1} \le y$, by construction of $V$, and $c \le a_j$, as
proved above. Therefore, we get that $a \notin a_j V$.

Putting all together, it follows that $a \notin T^2 \cup V^2 \cup
Ta_{i+1} \cup a_j V$, which is however in contradiction to \eqref{equ:mucchio},
as $a$ is obviously an element of $S^2$. Thus, $\langle
S\rangle $ is abelian, and we are done.
\end{proof}
In some sense, Theorem \ref{th:main} is best possible. More precisely,
\cite[Section 3]{Frei12} provides the example of a subset $S$ of the carrier of a linearly ordered
group generating a non-abelian subgroup and such that $|S^2| = 3 |S| - 2$.
\begin{cor}
Assume $\AA_\sharp$ is a linearly orderable semigroup and let $S$ be a finite subset of $A$ generating a non-abelian subsemigroup of $\AA$. Then $|S^2| \ge 3|S| - 2$.
\end{cor}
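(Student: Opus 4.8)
The plan is to read off the Corollary as the immediate contrapositive of Theorem \ref{th:main}. First I would note that $S$ is a finite set, so $S^2$ is finite and $|S^2|$ is a non-negative integer. Now suppose, towards a contradiction, that $|S^2| \le 3|S| - 2$ fails, i.e.\ $|S^2| < 3|S| - 2$; since $|S^2|$ and $3|S| - 3$ are both integers, this is the same as $|S^2| \le 3|S| - 3$. Applying Theorem \ref{th:main} to the linearly orderable semigroup $\AA = (A, \cdot)$ underlying $\AA_\sharp$ and to the finite set $S$, we conclude that $\langle S \rangle$ is abelian, contradicting the hypothesis that $S$ generates a non-abelian subsemigroup of $\AA$. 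Hence $|S^2| \ge 3|S| - 2$, as claimed.

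Equivalently, and perhaps more transparently, one may avoid the explicit proof by contradiction: Theorem \ref{th:main} asserts that $|S^2| \le 3|S|-3$ implies that $\langle S\rangle$ is abelian, so the failure of the conclusion — namely $\langle S\rangle$ non-abelian — forces the failure of the hypothesis, that is $|S^2| > 3|S|-3$, and integrality of $|S^2|$ upgrades this strict inequality to $|S^2| \ge 3|S|-2$.

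There is essentially no obstacle here: the only point requiring (a negligible amount of) care is precisely this passage from the strict bound $|S^2| > 3|S| - 3$, which is all the contrapositive of Theorem \ref{th:main} literally delivers, to the sharp bound $|S^2| \ge 3|S| - 2$; this is justified solely by the fact that cardinalities of finite sets are integers. It is worth remarking, as the text preceding the Corollary already does, that the bound is best possible, since \cite[Section 3]{Frei12} exhibits a subset $S$ of a linearly ordered group with $S$ generating a non-abelian subgroup and $|S^2| = 3|S| - 2$; thus no additive constant better than $-2$ can appear on the right-hand side.
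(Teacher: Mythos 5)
Your proof is correct and coincides with the paper's, which simply observes that the Corollary is a trivial restatement (contrapositive) of Theorem \ref{th:main}, the passage from $|S^2| > 3|S|-3$ to $|S^2| \ge 3|S|-2$ being immediate from integrality of cardinalities.
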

\begin{proof}
It is just a trivial restatement of Theorem \ref{th:main}.
\end{proof}
We have not found so far an appropriate way to extend Proposition 3.1 in \cite{Frei12} from finite subsets of linearly ordered groups, generating abelian subgroups, to finite subsets of linearly ordered semigroups, generating abelian subsemigroups, so we raise the following:
\begin{question}
Assume that $\AA$ is a linearly orderable semigroup. Let $S$ be a finite subset of $A$, set $s = |S|$ and $t = |S^2|$ for the sake of notation, and
suppose that $t \le 3s - 4$ and $\langle S \rangle $ is abelian. Is it then possible to find $a,b \in A$ such that $ab = ba$ and $S$ is a subset of the progression $a, ab, \ldots, ab^{t-s}$?
\end{question}
\appendix
\section{Examples}\label{sec:examples}
We conclude the paper with a few examples. As mentioned in the introduction, the basic goal is
to show that [linearly] orderable semigroups and related structures are far from being ``exotic''.

We start with an orderable semigroup which is not linearly
orderable. Next, we mention some notable classes of linearly orderable groups and a linearly orderable monoid which is not a linearly orderable group (we do not know if it embeds into a linearly ordered semigroup).

\begin{exa}
Every set $A$ can be turned into a semigroup by the operation $\cdot{}: A \times
A \to A: (a,b) \to a$; see, for instance, \cite[p. 3]{Howie96}. Trivially, if $\le$
is a total order on $A$ then $(A, \cdot{}, \le)$ is a totally ordered
semigroup.
However,
$(A, \cdot{})$ is not linearly orderable for $|A| \ge 2$ (e.g., because it is not cancellative).
\end{exa}
\begin{exa}\label{exa:torsion-freeness}
An interesting variety of linearly orderable groups is provided by abelian
torsion-free groups, as first proved by F. W. Levi in \cite{Levi13}, and the result can be, in fact,
extended to abelian cancellative torsion-free semigroups with no substantial modification; see the comments following Remark \ref{rem:cancellative_if_linearly_orderable} in Section \ref{sec:basic_properties} and Corollary 3.4 in R. Gilmer's book on commutative
semigroup rings \cite{Gilmer}.

In a similar vein, K. Iwasawa \cite{Iwasa48}, Malcev \cite{Malc48} and B. H.
Neumann \cite{Neum49}
established independently  that torsion-free nilpotent
groups are linearly orderable.

Save for the semigroup analogue of Levi's result, all of the above is
already mentioned in \cite{Frei12}, where the interested reader can find further references to existing
literature on the subject. Two more examples (of linearly orderable groups) which are \textit{not} included in \cite{Frei12} are \textit{pure} braid groups \cite{RZ98} and free groups \cite{Iwasa48}.
\end{exa}
\begin{exa}\label{exa:honest_semigroups}
As for linearly orderable monoids which are not linearly
orderable groups, consider, for instance, the free monoid
\cite[Section 1.6]{Howie96} on a well-ordered alphabet $(X,
\le)$ together with the ``shortlex ordering'': Words are primarily sorted by
length, with the shortest ones first, and words of the same length are then
sorted into lexicographical order.
\end{exa}

The next example seems interesting \textit{per se}. Not only it gives a family
of linearly ordered semigroups which are neither
abelian nor groups (at least in general); it also shows that, for
each
$n$, certain subsemigroups of ${\rm GL}_n(\mathbb R)$ consisting of triangular matrices are linearly orderable.

\begin{exa}\label{exa:matrices}
We let a semiring be a
triple $(A,+,\cdot)$ consisting of a set $A$ and
associative operations $+$ and $\cdot$ from $A \times A$ to $A$ (referred to, respectively, as the semiring addition and multiplication) such that
\begin{enumerate}[label={\rm\arabic{*}.}]
\item $(A,+)$ is an abelian monoid, whose identity we denote by $0$;
\item $0$ annihilates $A$, that is $0 \cdot a = a \cdot 0 =
0$ for every $a \in A$;
\item multiplication distributes over addition, that is $a(b+c) = ab + ac$ and $(a + b) c = ac + b c$ for all $a,b,c \in A$.
\end{enumerate}
(In other words, a sem\-i\-ring is just a
ring where elements
do not need have an additive inverse.) We call $(A, +)$ and $(A, \cdot)$, respectively, the additive monoid and the
multiplicative semigroup of $(A,+,\cdot)$, which in turn is
termed a unital semiring if $(A, \cdot)$ is a monoid too; see \cite[Ch.
II]{Hebisch98} and \cite[Ch. 1, p. 1]{Golan99}.

A semiring $(A, +, \cdot)$ is said to be orderable
if there exists a (total) order $\le$ on $A$
such that $(A, +, \le)$ and $(A, \cdot, \le)$ are ordered semigroups, in which case $(A, +, \cdot, \le)$ is referred to as an ordered semiring. If, on the other hand, the following hold:
\begin{enumerate}
\item[4.] $(A, +, \le)$ is a linearly ordered monoid;
\item[5.] $a c < b c$ and $c a < c b$ for all $a,b,c \in A$ with $a < b$ and $0 < c$,
\end{enumerate}
then $(A, +, \cdot)$ is said to be linearly orderable and $(A, +, \cdot, \le)$
is called a linearly ordered semiring; cf. \cite[Ch. 20]{Golan99}. Common examples of linearly ordered semirings
are the [non-negative] integers, the [non-negative] rational numbers, and the [non-negative] reals with their usual addition, multiplication, and order.

With that said, let $\mathbb A = (A, +, \cdot)$ be a fixed semiring. We write $\mathcal M_n(A)$ for the set of
$n$-by-$n$ matrices with entries in $A$. Endowed with the usual operations of
entry-wise addition and row-by-column multiplication induced by the
structure of $\AA$, here respectively denoted by the same
symbols as the addition and multiplication of the latter, $\mathcal M_n(A)$
becomes itself a semiring, which we call the semiring of $n$-by-$n$ matrices over $\AA$ and write as $\mathcal M_n(\mathbb A)$; see \cite[Ch. 3]{Golan99}.

Suppose now that $\mathbb A$ is linearly ordered by a certain order $\le$, in such a way that $\AA_\sharp = (A, +, \cdot, \le)$ is a linearly ordered semiring, and
denote by ${\rm U}_n(\AA_\sharp^+)$ the subsemigroup of the multiplicative
semigroup of $\mathcal M_n(\mathbb A)$ consisting of
all upper triangular matrices whose entries on or above the main diagonal belong to
$$
\AA_\sharp^+ = \{a \in A: 0 < a\}.
$$
We observe that ${\rm U}_n(\AA_\sharp^+)$ is not a group (and
not even a monoid) for $n \ge 2$. But what is perhaps more interesting is the following:
\begin{thm}\label{th:zig_zag_ordering}
${\rm U}_n(\AA_\sharp^+)$ is a linearly orderable semigroup.
\end{thm}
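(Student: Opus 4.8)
The plan is to write down an explicit total order on ${\rm U}_n(\AA_\sharp^+)$ and verify by hand that it makes the semigroup linearly ordered (this is the ``zig-zag'' order behind the label of the statement). First I would enumerate the positions $(i,j)$ with $1 \le i \le j \le n$ lexicographically by the pair $(j-i,\,i)$: positions are sorted first by their distance $d = j-i$ from the main diagonal, so the diagonal comes first, and positions in the same band are sorted by row index. Writing $p_1, \ldots, p_N$ for this enumeration, where $N = \binom{n+1}{2}$, and $m_r$ for the entry of a matrix $M$ in position $p_r$, I would declare $X \prec Y$, for $X \neq Y$, to mean that $x_r < y_r$ in $\AA_\sharp$ at the smallest index $r$ for which $x_r \neq y_r$. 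Since this is a lexicographic order on a finite power of the totally ordered set $\AA_\sharp^+$, it is automatically a total order: reflexivity, antisymmetry, transitivity and totality all follow at once from the fact that distinct matrices of ${\rm U}_n(\AA_\sharp^+)$ have distinct entry tuples.

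The structural fact doing the real work, which I would isolate as a short lemma, is that for $X, Y \in {\rm U}_n(\AA_\sharp^+)$ and $j - i = d$ one has $(XY)_{ij} = \sum_{k=i}^{j} x_{ik} y_{kj}$, where \emph{every} entry of $X$ or of $Y$ that occurs lies in a position of band at most $d$; moreover the only summand involving a band-$d$ entry of the left factor is $x_{ij} y_{jj}$, and the only one involving a band-$d$ entry of the right factor is $x_{ii} y_{ij}$, with $x_{ii}, y_{jj} \in \AA_\sharp^+$ since they are diagonal entries. (In passing, every summand $x_{ik} y_{kj}$ is a product of elements of $\AA_\sharp^+$, hence lies in $\AA_\sharp^+$, which re-confirms that ${\rm U}_n(\AA_\sharp^+)$ is closed under multiplication: sums and products of elements $>0$ are $>0$ by the semiring axioms.)

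With these ingredients the compatibility check becomes mechanical. Assume $X \prec Y$, let $(i_0, j_0)$ be the first position, say of band $d_0 = j_0 - i_0$, at which $X$ and $Y$ differ, and fix $Z \in {\rm U}_n(\AA_\sharp^+)$. For every position $(i,j)$ preceding $(i_0,j_0)$ in the enumeration one has $j - i \le d_0$, so each entry of $X$ appearing in $(XZ)_{ij}$ sits in a band $\le d_0$; the only one possibly in band $d_0$ is $x_{ij}$ itself, which equals $y_{ij}$ because $(i,j) \prec (i_0,j_0)$, and therefore $(XZ)_{ij} = (YZ)_{ij}$. At $(i_0,j_0)$ all summands of $(XZ)_{i_0j_0}$ and $(YZ)_{i_0j_0}$ agree except for the one involving the band-$d_0$ entry of the left factor, where $x_{i_0j_0} z_{j_0j_0} < y_{i_0j_0} z_{j_0j_0}$ by strict monotonicity of multiplication by the positive element $z_{j_0j_0}$; strict monotonicity of the semiring addition then yields $(XZ)_{i_0j_0} < (YZ)_{i_0j_0}$, whence $XZ \prec YZ$. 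The argument for $ZX \prec ZY$ is the exact mirror image, now using that the band-$d_0$ entry of $X$ occurring in $(ZX)_{i_0j_0}$ is $x_{i_0j_0}$ with positive coefficient $z_{i_0i_0}$. The one point that actually needs thought is the choice of enumeration itself: it must be arranged so that the first disagreement of the two factors propagates to \emph{exactly} the same position of the product, and does so simultaneously for left and for right multiplication; the band-first ordering achieves this precisely because a band-$d$ entry of a product depends only on the band-$\le d$ entries of each factor. The remaining bookkeeping with the enumeration $p_1, \ldots, p_N$ must be carried out carefully but is otherwise routine.
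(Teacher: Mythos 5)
Your proof is correct and follows essentially the same route as the paper's: the paper's order $\le_{n,\rm U}$ is exactly your band-first lexicographic order (within a fixed band $d=j-i$, sorting by $j$, as the paper does, coincides with sorting by $i$), and the compatibility check rests on the same observation that $(XY)_{ij}=\sum_{k=i}^{j}x_{ik}y_{kj}$ involves only entries of band at most $j-i$, with the unique band-$(j-i)$ contributions $x_{ij}y_{jj}$ and $x_{ii}y_{ij}$ carrying positive diagonal coefficients. No substantive differences to report.
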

\begin{proof}
Set $I_n = \{1, 2, \ldots, n\}$, $\Xi_n = \{(i,j) \in
I_n \times I_n: i \le j\}$ and define a binary relation $\le_n$ on
$\Xi_n$ by $(i_1, j_1) \le_n
(i_2, j_2)$ if and only if (i) $j_1 - i_1
< j_2 - i_2$ or (ii) $j_1 - i_1 = j_2 - i_2$ and $j_1 < j_2$. It is seen
that $\le_n$ is a well-order, so we can
define a binary relation $\le_{n,\rm U}$ on ${\rm U}_n(
\AA_\sharp^+)$ by taking, for $\alpha = (a_{i,j})_{i,j=1}^n$ and $\beta =
(b_{i,j})_{i,j=1}^n$ in ${\rm U}_n(\AA_\sharp^+)$, $\alpha
\le_{n,\rm U} \beta$ if and only if (i) $\alpha = \beta$ or (ii) there exists
$(i_0,j_0) \in \Xi_n$ such that $a_{i_0,j_0} <
b_{i_0,j_0}$ and $a_{i,j} = b_{i,j}$ for all $(i,j) \in \Xi_n$ with
$(i,j) <_n (i_0, j_0)$.

It is straightforward that $\le_{n,\rm U}$ is an order. To see, in particular, that it is total: Pick $\alpha = (a_{i,j})_{i,j=1}^n$ and $\beta =
(b_{i,j})_{i,j=1}^n$ in ${\rm U}_n(\AA_\sharp^+)$ with $\alpha \ne \beta$. There then exists $(i_0, j_0) \in \Xi_n$ such that $a_{i_0,j_0} \ne b_{i_0,j_0}$, where $(i_0, j_0)$ is chosen in such a way that $a_{i,j} = b_{i,j}$ for every $(i,j) \le_n (i_0, j_0)$. Since $\le$ is total, we have that either $\alpha <_{n,\rm U} \beta$ if $a_{i_0,j_0} < b_{i_0,j_0}$ or $\beta <_{n,\rm U} \alpha$ otherwise, and we are done.

It remains to prove that ${\rm U}_n(\AA_\sharp^+)$ is linearly ordered by $\le_{n,\rm U}$. For, let $\alpha$ and $\beta$ be as above and suppose $\alpha
<_{n,\rm U} \beta$, viz there exists $(i_0, j_0) \in \Xi_n$ with
$a_{i_0,j_0} < b_{i_0,j_0}$ and $a_{i,j} = b_{i,j}$ for all $(i,j) \in
\Xi_n$ with $(i,j) <_n (i_0, j_0)$. Given $\gamma = (c_{i,j})_{i,j=1}^n$ in ${\rm U}_n(\AA_\sharp^+)$ we then have $a_{i,k} c_{k,j}
\le b_{i,k} c_{k,j}$ and $ c_{i,k} a_{k,j} \le c_{i,k} b_{k,j}$ for all
$(i,j) \in \Xi_n$ and $k \in I_n$ such that $(i,k) \leq_n (i_0, j_0)$ and $(k,j)
\le_n (k,j_0)$, and in fact $a_{i_0,j_0} c_{j_0,j_0} < b_{i_0,j_0}
c_{j_0,j_0}$
and $ c_{i_0,i_0} a_{i_0,j_0} < c_{i_0,i_0} b_{i_0,j_0}$ since $(A, +, \cdot, \le)$ is a linearly ordered semiring. It follows that, for all $(i,j)
\in \Xi_n$ with $(i,j) \le_n (i_0, j_0)$,
\begin{displaymath}
\textstyle \sum_{k=1}^n a_{i,k} c_{k,j} = \sum_{k=i}^j
a_{i,k} c_{k,j} \le \sum_{k=i}^j b_{i,k} c_{k,j} = \sum_{k=1}^n
b_{i,k} c_{k,j}
\end{displaymath}
and, similarly, $\sum_{k=1}^n c_{i,k} a_{k,j} \le \sum_{k=1}^n
c_{i,k} b_{k,j}$. In particular, these majorations are equalities for
$(i,j) <_n (i_0, j_0)$ and strict inequalities if $(i,j) = (i_0, j_0)$. So $\alpha \gamma <_{n,\rm U} \beta
\gamma$ and $ \gamma \alpha <_{n,\rm U} \gamma  \beta$, and the proof is complete.
\end{proof}
We refer to the order $\le_{n,\rm U}$ defined in the proof of Theorem
\ref{th:zig_zag_ordering} as the \textit{zig-zag order} on ${\rm U}_n(\AA_\sharp^+)$.
If ${\rm L}_n(\AA_\sharp^+)$ is the
subsemigroup of the multiplicative semigroup of $\mathcal M_n(\mathbb A)$
consisting of all \textit{lower} triangular matrices whose entries on or below the main diagonal are in $\AA_\sharp^+$, it is then easy to see that ${\rm L}_n(\AA_\sharp^+)$ is
itself linearly orderable: It is, in fact, linearly ordered by the binary
relation $\le_{n,\rm L}$ defined by taking $\alpha \le_{n,\rm L} \beta$ if and only if $\alpha^\top
\le_{n,\rm U} \beta^\top$, where the superscript `$\top$' stands for `transpose'.
If ${\rm T}_n(\AA_\sharp^+)$ is the
subsemigroup of $(\mathcal M_n(A), \cdot)$
generated by ${\rm U}_n(\AA_\sharp^+)$ and ${\rm L}_n(\AA_\sharp^+)$, it
is hence natural to ask the following:
\begin{question}
Is ${\rm T}_n(\AA_\sharp^+)$ a linearly orderable semigroup?
\end{question}
While at present we do not have an answer to this, it was remarked by
Carlo Pagano (Universit\`a di Roma Tor Vergata, Italy) in a private
communication that $\mathcal M_n(\AA_\sharp^+)$, namely the subsemigroup of
$(\mathcal M_n(A), \cdot)$ consisting of \textit{all} matrices with entries in
$\AA_\sharp^+$, is not in general linearly orderable:
For a specific counterexample,
let $\AA_\sharp$ be the real field together with its usual order, and take as $\alpha$ the $n$-by-$n$ real matrix
whose entries are all equal to $1$ and as $\beta$ \textit{any} $n$-by-$n$ matrix with
positive real entries each of whose columns has sum equal to $n$. Then $\alpha^2 = \alpha
\beta$.

Apparently, the question has not been addressed
before by other authors,
although the ordering of $\mathcal
M_n(\mathbb A)$, in the case where $\mathbb A$ is a \textit{partially} orderable
semiring, is considered in \cite[Example 20.60]{Golan99}.
\end{exa}
\begin{exa}
In what follows, we let $\mathbb K = (K, +, \cdot)$ be a semiring (see Example \ref{exa:matrices} for the terminology) and $\mathbb A = (A, \diamond)$ a semigroup, and use $K[A]$ for the set of all functions $f: A \to K$ such that $f$ is finitely supported in $\mathbb K$, namely $f^{-1}(0_K)$ is a finite subset of $A$, where $0_K$ is the additive identity of $\mathbb K$.

In fact, $K[A]$ can be turned into a semiring, here written as $\mathbb K[\mathbb A]$, by endowing it with the operations of pointwise addition and Cauchy product induced by the structure of $\mathbb A$ and $\mathbb K$ (these operations are denoted below with the same symbols as the addition and the multiplication of $\mathbb K$, respectively). We have:
\begin{thm}\label{thm:semigroup_semiring}
Suppose $\mathbb K$ is a linearly orderable semiring and $\mathbb A$ is a linearly orderable semigroup. Then $\mathbb K[\mathbb A]$ is a linearly orderable semiring too.
\end{thm}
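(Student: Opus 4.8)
The plan is to put on $K[A]$ the order that compares two functions by their highest disagreeing coefficient, and then to verify the three requirements for a linearly ordered semiring: totality of the order, that $(K[A],+,\le)$ is a linearly ordered monoid (axiom 4), and the multiplicative axiom 5. First fix a total order $\le_K$ witnessing the linear orderability of $\mathbb{K}$ and a total order $\le_A$ witnessing that of $\mathbb{A}$, and for $f \in K[A]$ write $\operatorname{supp}(f) = \{a \in A : f(a) \ne 0_K\}$, a finite set; the additive identity of $K[A]$ is the zero function $0$. For distinct $f,g \in K[A]$ the set $D(f,g) := \{a \in A: f(a) \ne g(a)\}$ is finite and non-empty, so it has a $\le_A$-greatest element $a_0$, and we declare $f < g$ iff $f(a_0) <_K g(a_0)$. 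Totality of $\le_K$ makes $\le$ total; reflexivity and antisymmetry are immediate; transitivity is a brief case analysis on the relative positions of the greatest disagreeing coefficients of the three pairs involved, using that two functions agree above their greatest disagreeing coefficient.

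Axiom 4 is easy: $(K,+)$ is cancellative (being the additive monoid of a linearly ordered semiring), so $f(a)=g(a) \Leftrightarrow f(a)+h(a)=g(a)+h(a)$ for every $h \in K[A]$; hence $D(f+h,g+h)=D(f,g)$ with the same greatest element $a_0$, and since $(K,+,\le_K)$ is a linearly ordered monoid, $f<g$ forces $(f+h)(a_0)<_K(g+h)(a_0)$, i.e. $f+h<g+h$. Together with the (given) fact that $(K[A],+)$ is an abelian monoid, $(K[A],+,\le)$ is a linearly ordered monoid.

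The substantive step is axiom 5. Let $f<g$ and $0<h$; unwinding the definition, $0<h$ says that $h\ne 0$ and its leading coefficient $h(\delta)$, with $\delta := \max_{\le_A}\operatorname{supp}(h)$, is $>_K 0_K$. Set $a_0 := \max_{\le_A} D(f,g)$ and $c_0 := a_0 \diamond \delta$; I will show $c_0 = \max_{\le_A} D(fh,gh)$ and $(fh)(c_0)<_K(gh)(c_0)$, which gives $fh<gh$. In the convolution $(fh)(c)=\sum_{a\diamond b=c}f(a)h(b)$ only pairs with $b\in\operatorname{supp}(h)$ matter, and for given $c$ and $b$ there is at most one such $a$ by cancellativity of $\mathbb{A}$, so the sum is finite. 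If $c>_A c_0$, each contributing pair has $b\le_A\delta$, whence $a\diamond\delta \ge_A a\diamond b = c >_A a_0\diamond\delta$ (by Proposition \ref{prop:elementary_properties} and the fact that $(A,\diamond,\le_A)$ is linearly ordered), forcing $a>_A a_0$ and so $f(a)=g(a)$; thus $(fh)(c)=(gh)(c)$. If $c=c_0$, the same estimate gives $a\ge_A a_0$ for every contributing pair; the pairs with $a>_A a_0$ contribute equal summands to $(fh)(c_0)$ and $(gh)(c_0)$, while the unique pair with $a=a_0$ is $(a_0,\delta)$ (again by cancellativity), contributing $f(a_0)h(\delta)$ versus $g(a_0)h(\delta)$. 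As $f(a_0)<_K g(a_0)$ and $0_K<_K h(\delta)$, axiom 5 for $\mathbb{K}$ yields $f(a_0)h(\delta)<_K g(a_0)h(\delta)$, and adding the common summand and using strict monotonicity of $+$ in $(K,+,\le_K)$ gives $(fh)(c_0)<_K(gh)(c_0)$; in particular $c_0\in D(fh,gh)$, so $c_0=\max_{\le_A}D(fh,gh)$, as wanted. The inequality $hf<hg$ follows by the symmetric argument, using the ``$ca<cb$'' half of axiom 5 for $\mathbb{K}$, left cancellativity of $\mathbb{A}$, and leading exponent $\delta\diamond a_0$. This proves axiom 5 for $\mathbb{K}[\mathbb{A}]$ and finishes the proof.

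The real obstacle is precisely this leading-term analysis of the Cauchy product. Its subtlety is that a linearly orderable semiring can have zero divisors (for instance, the null multiplication on any non-trivial linearly ordered additive monoid whose zero is the largest element is linearly orderable), so one must not argue that the leading coefficient of $fh$ equals the product of those of $f$ and $h$ and is therefore non-zero --- in particular one cannot assume $fh\ne 0$ in advance. The argument above avoids this by reasoning only about the single coefficient at $c_0$ and invoking axiom 5 of $\mathbb{K}$ there, which is legitimate exactly because the multiplier $h$ is positive (so $h(\delta)>_K 0_K$); and, consistently, if $\mathbb{K}$ has no positive element then $K[A]$ has no $h$ with $0<h$ and axiom 5 for $\mathbb{K}[\mathbb{A}]$ holds vacuously. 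The remaining points --- finiteness of the convolution and uniqueness of the ``complementary factor'' --- are routine once one recalls that $\mathbb{A}$ is cancellative (Remark \ref{rem:cancellative_if_linearly_orderable}).
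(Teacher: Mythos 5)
Your proof is correct and follows essentially the same route as the paper's: a lexicographic order on $K[A]$ determined by the extremal disagreeing coefficient, with additive compatibility coming from cancellativity of $(K,+)$ and the multiplicative axiom verified by isolating the corresponding extremal term of the Cauchy product. The only difference is cosmetic --- you compare at the $\le_A$-greatest disagreement point of $(f,g)$ and the leading term of $h$, whereas the paper uses the $\le_A$-least point and the trailing term (i.e., your construction is the paper's applied to the reversed order on $A$).
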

\begin{proof}
The claim is obvious if $A = \emptyset$, so assume that $A$ is non-empty, and let $\le_K$ and $\le_A$ be, respectively, orders on $A$ and $K$ for which $(K, +, \cdot, \le_K)$ is a linearly ordered semiring and $(A, \diamond, \le_A)$ a linearly ordered semigroup.

Then, given $\alpha \in A$ and $f \in K[A]$, we let $f_{\downarrow \alpha}$ (respectively, $f_{\uparrow \alpha}$) be the function $A \to K$ taking $a$ to $f(a)$ if $a <_A \alpha$ (respectively, $\alpha \le_A a$), and to $0_K$ otherwise, in such a way that $f = f_{\downarrow \alpha} + f_{\uparrow \alpha}$. Also, we denote by $\mu$ the map $K[A] \times K[A] \to A \cup \{A\}$ sending a pair $(f,g)$ to $\min\{a \in A: f(a) \ne g(a)\}$ if $f \ne g$ (the minimum is taken with respect to  $\le_A$, and it exists by consequence of the definition itself of $K[A]$), and to $A$ otherwise.

We define a binary relation $\le$ on $K[A]$ by letting $f \le g$ if and only if either $f = g$ or $f \ne g$ and $f(\mu(f,g)) <_K f(\mu(f,g))$. It is clear that $\le$ is a total order on $K[A]$, and we want to prove that it is also compatible with the algebraic structure of $\mathbb K[\mathbb A]$, in the sense that $\mathbb K[\mathbb A]$ is linearly ordered by $\le$.

For, pick $f,g,h \in K[A]$ with $f < g$.
Since the additive monoid of $\mathbb K$ is linearly ordered by $\le_K$, we have $\mu(f, g) = \mu(f +
h, g + h)$, and thus $f + h < g + h$. That is, $(K[A], +, \le)$ is a linearly ordered monoid in its own right. On another hand, assume $\Theta < h$, where $\Theta$ is the function $A \to K: a \mapsto 0_K$, and set $\alpha = \mu(f,g)$ and $\beta = \mu(\Theta, h)$.
We have $ f_{\downarrow \alpha} = g_{\downarrow \alpha}$ and $h = h_{\uparrow \beta}$, with the result that $f h < g h$ if and only if $f_{\uparrow \alpha} h_{\uparrow \beta} < g_{\uparrow \alpha} h_{\uparrow \beta}$, and the latter inequality is certainly true, since on the one side $ f_{\uparrow \alpha} h_{\uparrow \beta}(a) = g_{\uparrow \alpha}  h_{\uparrow \beta}(a) = 0_K$ for $a <_A \alpha \diamond \beta$, and on the other side
\begin{displaymath}
f_{\uparrow \alpha} h_{\uparrow \beta}(\alpha \diamond \beta) = f_{\uparrow \alpha}(\alpha)  h_{\uparrow \beta}(\beta) <_K g_{\uparrow \alpha}(\alpha) h_{\uparrow \beta}(\beta) = g_{\uparrow \alpha} h_{\uparrow \beta}(\alpha \diamond \beta).
\end{displaymath}
In a similar way, it is seen that $h f < h g$. So, by the arbitrariness of $f$, $g$, and $h$, we get that $(K[A], +, \cdot, \le)$ is a linearly ordered semiring.
\end{proof}
So taking $\mathbb A$ to be the free commutative monoid (respectively, the free monoid) on a certain set and recalling that free groups (and hence free monoids) are linearly orderable (Example \ref{exa:torsion-freeness}), we have:
\begin{cor}
The semiring $\mathbb K$ is linearly orderable if and only if the same is true for the semiring of polynomials over $\mathbb K$ depending on a given set of pairwise commuting (respectively, non-commuting) variables.
\end{cor}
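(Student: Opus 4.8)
The plan is to recognize the two polynomial semirings occurring in the statement as instances of the semigroup-semiring construction and then apply Theorem~\ref{thm:semigroup_semiring}. Precisely, if $X$ denotes the prescribed set of variables, then the semiring of polynomials over $\mathbb K$ in the family $X$ of pairwise commuting (respectively, non-commuting) variables is, by construction, exactly $\mathbb K[\mathbb A]$ where $\mathbb A$ is the free commutative monoid (respectively, the free monoid) on $X$. With this identification in hand the corollary reduces to two implications, one of which is a direct appeal to Theorem~\ref{thm:semigroup_semiring} and the other of which is the observation that linear orderability passes to subsemirings.

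For the ``if'' part, assume $\mathbb K$ is a linearly orderable semiring. The free monoid on $X$ embeds as a subsemigroup into the free group on $X$, and the free commutative monoid on $X$ embeds as a subsemigroup into the free abelian group on $X$; both of these groups are linearly orderable (the first by the results recalled in Example~\ref{exa:torsion-freeness}, the second because it is abelian and torsion-free, hence linearly orderable by Levi's theorem, again cited there). Since the restriction to a subsemigroup of an order making a semigroup [linearly] ordered still makes the subsemigroup [linearly] ordered, it follows that $\mathbb A$ is linearly orderable in either case, and Theorem~\ref{thm:semigroup_semiring} then yields that $\mathbb K[\mathbb A]$ is a linearly orderable semiring, as wanted.

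For the ``only if'' part, note that $\mathbb A$ is a monoid, say with identity $e$, and that the assignment sending $k \in K$ to the function $A \to K$ taking the value $k$ at $e$ and $0_K$ elsewhere is an injective homomorphism of semirings: its image is closed under the pointwise sum, and since a pair of functions supported at $\{e\}$ forces their Cauchy product to be supported at $\{ee\} = \{e\}$, it is closed under the Cauchy product as well, so this image is a subsemiring of $\mathbb K[\mathbb A]$ isomorphic to $\mathbb K$ (and carrying the additive identity $0_K$ to the zero of $\mathbb K[\mathbb A]$). If $\mathbb K[\mathbb A]$ is linearly orderable, restricting a compatible order to this subsemiring and transporting it back along the isomorphism exhibits $\mathbb K$ as linearly orderable.

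The whole argument is essentially formal, so there is no genuine obstacle; the only points that deserve a line of verification are that linear orderability descends to subsemigroups and to subsemirings — which is immediate, since all the defining clauses are universally quantified order inequalities, and totality too, and these survive passage to a subset — and the identification of the relevant free monoids among the linearly orderable semigroups, which is already on record in Example~\ref{exa:torsion-freeness}.
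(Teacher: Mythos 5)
Your proof is correct and follows essentially the same route as the paper, which justifies the corollary in one line by viewing the polynomial semiring as $\mathbb K[\mathbb A]$ for $\mathbb A$ the free (commutative) monoid on the variables, invoking the linear orderability of these monoids via Example \ref{exa:torsion-freeness}, and applying Theorem \ref{thm:semigroup_semiring}. You merely make explicit the (implicit in the paper) converse direction via the constant-polynomial embedding of $\mathbb K$ as a subsemiring, which is a welcome but routine verification.
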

\end{exa}

\section*{Acknowledgments}
The author is indebted with Martino Garonzi (Universit\`a di Padova, Italy) for
having attracted his attention to the work of G. A. Freiman, M. Herzog and
coauthors by which this research was inspired. Also, he is grateful to Alain Plagne (CMLS, \'Ecole
polytechnique, France) for uncountably many
suggestions and to Carlo Sanna (Universit\`a di Torino, Italy) for an accurate check of the proof of Theorem \ref{th:main}. Last but not least, he would like to thank the anonymous referees for valuable comments which improved the quality of the paper (most notably, this is the case with Theorem \ref{thm:semigroup_semiring}, initially stated and proved by the author in a less general form).

\end{document}